\documentclass[11pt]{amsart}
\usepackage{}
\usepackage{amsfonts}
\usepackage{amsfonts,latexsym,rawfonts,amsmath,amssymb,amsthm}
\usepackage[plainpages=false]{hyperref}

\usepackage{graphicx}
\makeatletter
\renewcommand\@biblabel[1]{}
\makeatother

\numberwithin{equation}{section}





\newcommand{\beq}{\begin{equation}}
\newcommand{\eeq}{\end{equation}}
\newcommand{\beqs}{\begin{eqnarray*}}
\newcommand{\eeqs}{\end{eqnarray*}}
\newcommand{\beqn}{\begin{eqnarray}}
\newcommand{\eeqn}{\end{eqnarray}}
\newcommand{\beqa}{\begin{array}}
\newcommand{\eeqa}{\end{array}}

\def\lra{\longrightarrow}

\def\bc{\begin{center}}
\def\ec{\end{center}}

\def\begeq{\begin{equation}}
\def\endeq{\end{equation}}
\def\and{\quad{\rm and}\quad}

\let\lra=\longrightarrow

\def\mapright\#1{\,\smash{\mathop{\lra}\limits^{\#1}}\,}

\newtheorem{prop}{Proposition}[section]
\newtheorem{theo}[prop]{Theorem}
\newtheorem{lem}[prop]{Lemma}

\newtheorem{cor}[prop]{Corollary}
\newtheorem{rem}[prop]{Remark}

\newtheorem{defi}[prop]{Definition}

\begin{document}

\date{}
\author   {Yuxing Deng }
\author { Xiaohua $\text{Zhu}^*$}

\thanks {* Partially supported by the NSFC Grants  11271022 and 11331001}
 \subjclass[2000]{Primary: 53C25; Secondary:  53C55,
 58J05}
\keywords {  Ricci soliton, Ricci flow, $\kappa$-solutions,  $\epsilon$-pinching  curvature}

\address{ Yuxing Deng\\School of Mathematical Sciences, Beijing Normal University,
Beijing, 100875, China\\
dengyuxing@mail.bnu.edu.cn}

\address{ Xiaohua Zhu\\School of Mathematical Sciences and BICMR, Peking University,
Beijing, 100871, China\\
 xhzhu@math.pku.edu.cn}

\title{ Steady Ricci solitons  with horizontally $\epsilon$-pinched  Ricci curvature   }
\maketitle

\section*{\ }

\begin{abstract}  In this paper,  we prove that any $\kappa$-noncollapsed gradient steady Ricci soliton  with nonnegative  curvature operator and horizontally $\epsilon$-pinched  Ricci curvature must be rotationally symmetric.   As an application,   we  show  that  any $\kappa$-noncollapsed gradient steady Ricci soliton $(M^n, g,f)$ with   nonnegative curvature operator must be rotationally symmetric if  it admits a unique equilibrium point and its scalar curvature $R(x)$  satisfies $\lim_{r(x)\rightarrow\infty}R(x)f(x)=C_0\sup_{x\in M}R(x)$ with $C_0>\frac{n-2}{2}$.  \end{abstract}

\section {Introduction}

As one of singular model solutions of  Ricci flow,  it is important to   classify   steady Ricci solitons under a suitable curvature condition   \cite{H2}.   In his celebrated paper \cite{Pe},   Perelman conjectured that \textit{all 3-dimensional $\kappa$-noncollapsed steady (gradient) Ricci solitons must be rotationally symmetric}.\footnote{It is proved by Chen  that any  3-dimensional ancient solution has nonnegative sectional curvature \cite{Ch}.}
The conjecture is  solved by Brendle in 2012 \cite{Br1}.   For higher dimensions,  under an extra condition that   the soliton is  asymptotically   cylindrical,    Brendle  also  proves that   any $\kappa$-noncollapsed  steady Ricci soliton  with positive sectional curvature must be rotationally symmetric in \cite{Br2}.  In general, it is still open  \textit{whether an $n$-dimensional $\kappa$-noncollapsed steady Ricci soliton with positive curvature operator is rotationally symmetric for $n\ge 4$}.  For $\kappa$-noncollapsed steady  K\"{a}hler-Ricci  solitons with nonnegative bisectional curvature, the authors have recently proved   that they must be flat \cite{DZ2}, \cite{DZ3}.

 Recall from \cite{Br2},

\begin{defi}\label{brendle} An $n$-dimensional steady  Ricci soliton $(M,g,f)$  is called  asymptotically   cylindrical   if   the following
holds:

(i) Scalar curvature $R(x)$  of $g$ satisfies
$$\frac{C_1}{\rho(x)} \le  R(x) \le  \frac{C_2}{\rho(x)},~\forall~\rho(x)\ge r_0, $$
where $C_1, C_2$  are two positive constants and $\rho(x)$ denotes the distance of $x$  from a fixed  point $x_0$.

(ii) Let $p_m$ be an arbitrary sequence of marked points going to infinity.
Consider  rescaled metrics
$ g_m(t) = r_m^{-1} \phi^*_{r_m t} g,$ where
$r_m R(p_m) = \frac{n-1}{2} + o(1)$ and $ \phi_{ t}$ is a one-parameter subgroup generated by $X=-\nabla f$.   As  $m \to\infty,$
 flows $(M,  g_m(t), p_m)$
converge in the Cheeger-Gromov sense to a family of shrinking cylinders
$(   \mathbb R \times \mathbb S^{n-1}(1), \widetilde g(t)), t \in  (0, 1).$  The metric $\widetilde g(t)$  is given by
\begin{align}\label{n-2-behavior}  \widetilde g(t) =   dr^2+ (n - 2)(2 -2t) g_{\mathbb S^{n-1}(1)},
\end{align}
 where $\mathbb S^{n-1}(1)$ is the unit sphere of euclidean space.
\end{defi}

The purpose of  present  paper is to   give an approach to verify   whether  a $\kappa$-noncollapsed steady Ricci soliton with positive curvature operator is asymptotically   cylindrical.     As  a generalization of $\epsilon$-pinched   Ricci curvature  (cf. \cite{H1}, \cite{CZ}),  we introduce a notion of
horizontally  $\epsilon$-pinched  Ricci curvature as follows.

\begin{defi}
A steady soliton $(M,g,f)$ with positive Ricci curvature is called horizontal   Ricci  curvature $\epsilon$-pinched   if $(M,g,f)$ admits a unique equilibrium point  and
 there exist  $r_0>0$ and $\epsilon>0$ such that  on each level set
 $$\Sigma_r= \{x\in M|~ f(x)=r\},~\forall ~r\ge r_0,$$
   Ricci  curvature  is  $\epsilon$-pinched, i.e.,
\begin{align}\label{pinching along level set}
{\rm \overline{Ric}}(v,v)\geq \epsilon R(x){\overline g}(v,v), ~\forall~ v\in T_{x}\Sigma_{r},
\end{align}
where ${\rm \overline{Ric}}$ is   Ricci curvature of $(\Sigma_r,\overline{g})$  with  the  induced metric $\overline{g}$  on
$\Sigma_r$ as a hypersurface  of $(M,g)$.
\end{defi}

A point $o$  in $(M,g,f)$ is called an  equilibrium one if $\nabla f(o)=0$. Such a point is unique if   Ricci curvature of $(M,g,f)$ is positive.
Moreover, each   $\Sigma_r$ is smooth as long as  $r$ is sufficiently large (cf. Lemma \ref{topology of level set} below).

The following is our main  result in this paper.

\begin{theo}\label{theorem-pinching case}
Any $\kappa$-noncollapsed steady Ricci soliton $(M,g,f)$  with nonnegative curvature operator and  horizontally  $\epsilon$-pinched  Ricci curvature is asymptotically   cylindrical in sense of Brendle.
  As a consequence,  $(M,g,f)$ must be rotationally symmetric.
\end{theo}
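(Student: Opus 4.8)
The plan is to verify conditions (i) and (ii) of Definition \ref{brendle} and then quote Brendle's classification \cite{Br2}. I normalize the steady soliton equation as $\mathrm{Ric}=\nabla^2 f$, so that $f$ attains its minimum at the equilibrium point and the level sets $\Sigma_r=\{f=r\}$ exhaust the end of $M$ as $r\to\infty$; the standard identities $R+|\nabla f|^2=C_0$ with $C_0=\sup_M R$ and $\nabla R=-2\,\mathrm{Ric}(\nabla f,\cdot)$ are then available. Since $\mathrm{Ric}>0$, $f$ has a unique critical point, $\nu=\nabla f/|\nabla f|$ is a well-defined outward unit normal along the foliation $\{\Sigma_r\}$, and by Lemma \ref{topology of level set} each $\Sigma_r$ is smooth and closed for $r\ge r_0$. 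The first computation I would record is that the second fundamental form of $\Sigma_r$ is $h=\mathrm{Ric}|_{T\Sigma_r}/|\nabla f|>0$, so $h=O(R)$, and that the Gauss equation yields $\overline R=R-2\,\mathrm{Ric}(\nu,\nu)+O(R^2)$ for the intrinsic scalar curvature of the level set. Because $R\le C_0$ and the curvature operator is nonnegative, $|\mathrm{Rm}|\le c\,R$ is bounded, so the self-similar flow $g(t)=\phi_t^*g$ is a complete $\kappa$-noncollapsed ancient solution with bounded nonnegative curvature operator, i.e.\ an ancient $\kappa$-solution.

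Next I would extract the geometry of the cross-sections from the pinching \eqref{pinching along level set}. Summing it gives $\overline{\mathrm{Ric}}\ge\epsilon R\,\bar g>0$, so Bonnet--Myers bounds the intrinsic diameter by $\mathrm{diam}(\Sigma_r)\le\pi\sqrt{(n-2)/(\epsilon\min_{\Sigma_r}R)}$; thus after rescaling by the scalar curvature the level sets have uniformly bounded diameter and, by $\kappa$-noncollapsing, do not collapse. Positivity of $\overline{\mathrm{Ric}}$ together with the ambient condition (curvature operator $\ge0$) and the Gauss equation also forces $(\Sigma_r,\bar g)$ to have nonnegative curvature operator, so each $\Sigma_r$ is diffeomorphic to $\mathbb S^{n-1}$. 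This diameter bound is the essential use of the pinching: it guarantees that the rescaled necks stay compact rather than opening up.

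I would then carry out the blow-up analysis required by Definition \ref{brendle}(ii). For a marked sequence $p_m\to\infty$ put $g_m(t)=r_m^{-1}\phi^*_{r_mt}g$ with $r_mR(p_m)=\tfrac{n-1}{2}+o(1)$; by Hamilton--Cheeger--Gromov compactness for ancient $\kappa$-solutions these subconverge to a smooth limit flow. Since a constant rescaling does not change the Levi-Civita connection, the recentred potential $\hat f_m=(f-f(p_m))/\sqrt{r_m}$ satisfies $|\nabla_{g_m}\hat f_m|^2=|\nabla_g f|^2\to C_0$ and $\nabla^2_{g_m}\hat f_m=r_m^{-1/2}\,\mathrm{Ric}_g\to0$, so the limit carries a nonconstant function with parallel gradient of constant length; by the de Rham splitting the limit is a product $\mathbb R\times N^{n-1}$, with $N$ compact by the diameter bound of the previous step. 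The factor $N$ is then an $(n-1)$-dimensional compact ancient $\kappa$-solution with positive curvature operator, hence a shrinking round sphere, and matching the normalization $r_mR(p_m)=\tfrac{n-1}{2}$ identifies the limit with the shrinking cylinder \eqref{n-2-behavior}. This establishes (ii).

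It remains to deduce the linear scalar-curvature bound (i). The uniform cylinder convergence shows that $\Sigma_r$ is, at scale $(\min_{\Sigma_r}R)^{-1/2}$, a round sphere of radius $a(r)\asymp(\min_{\Sigma_r}R)^{-1/2}$, that the tangential Ricci eigenvalues are $\asymp R/(n-1)$, and that $\mathrm{Ric}(\nu,\nu)/R\to0$; combined with $|\nabla f|\to\sqrt{C_0}$ this gives $H=\mathrm{tr}(\mathrm{Ric}|_{T\Sigma_r})/|\nabla f|\asymp R\asymp a^{-2}$. Integrating the resulting relation $\tfrac{d}{ds}(a^2)\asymp\mathrm{const}$ along the normal flow, and using $f\asymp\rho$ (a consequence of $|\nabla f|\to\sqrt{C_0}$), yields $a^2\asymp\rho$ and hence $C_1/\rho\le R\le C_2/\rho$, which is (i). Therefore $(M,g,f)$ is asymptotically cylindrical in the sense of Definition \ref{brendle}. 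Finally, since the curvature operator is nonnegative and the soliton is non-flat with $\mathrm{Ric}>0$, the strong maximum principle promotes it to a positive curvature operator, in particular to positive sectional curvature, and Brendle's theorem \cite{Br2} then gives that $(M,g,f)$ is rotationally symmetric. The main obstacle I expect is the blow-up step: showing that the limit genuinely splits as $\mathbb R\times N$ with $N$ a \emph{round} sphere carrying the precise metric \eqref{n-2-behavior} --- rather than producing some other compact or non-compact ancient limit --- which is exactly where the diameter control from the pinching, the $\kappa$-noncollapsing, and the parallel limiting potential must be played off against one another; pinning down the decay rate $\mathrm{Ric}(\nu,\nu)=O(R^2)$ precisely enough to close (i) is the secondary difficulty.
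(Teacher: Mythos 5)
There are two genuine gaps in your proposal, and both occur exactly where the horizontal pinching hypothesis has to do more work than you ask of it. The first is in the splitting step. Your assertions that $|\nabla_{g_m}\hat f_m|^2=|\nabla_g f|^2\to C_0$ and that $\nabla^2_{g_m}\hat f_m\to 0$ are not free: measuring the Hessian in the rescaled metric gives $|\nabla^2_{g_m}\hat f_m|_{g_m}\approx |\mathrm{Ric}|_g/\sqrt{R(p_m)}\le C\sqrt{R(p_m)}$ (using $|\mathrm{Ric}|\le cR$ and the local Harnack bound $R\le C(d)R(p_m)$ on rescaled balls, as in (\ref{x-parallel}) and (\ref{bounded-curvature})), and by the identity $R+|\nabla f|^2=R_{\max}$ both of your claims are \emph{equivalent} to the uniform decay $R(x)\to 0$ as $\rho(x)\to\infty$. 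That decay is not automatic for a steady soliton with positive Ricci curvature and an equilibrium point: $R$ is monotone along the integral curves of $-\nabla f$, so it has a limit at infinity, but nothing elementary forces that limit to vanish. The paper proves the decay in Lemma \ref{pinching of scalar curvature}, Lemma \ref{lem-curvature decay} and Corollary \ref{uniform curvature decay}: the pinching yields a Harnack-type bound $\sup_{\Sigma_r}R\le C(\epsilon)\inf_{\Sigma_r}R$ on level sets, and Deruelle's estimate that the average of $R$ over $B(o,r)$ is at most $C/r$ (Lemma 4.3 in \cite{D}) then rules out a uniform positive lower bound. Your deduction of property (i) of Definition \ref{brendle}, which uses $|\nabla f|\to\sqrt{C_0}$ and $f\asymp\rho$, leans on the same unproven decay.

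The second gap is more serious: the identification of the cross-section. You classify the limit by saying that $N$ is a compact ancient $\kappa$-solution with positive curvature operator, ``hence a shrinking round sphere.'' That implication is false --- Perelman's rotationally symmetric, non-round ancient $\kappa$-solution on $\mathbb{S}^3$ is a compact ancient $\kappa$-solution with positive curvature operator --- and in any case your limit a priori has only \emph{nonnegative} curvature operator. This is precisely where the pinching must be used a second time, beyond the Myers diameter bound: the paper carries (\ref{pinching along level set}) to the limit through the Gauss equation (the second-fundamental-form terms are quadratic in $\mathrm{Ric}$ over $|\nabla f|^2$, hence $o(1)\cdot R_{i'j'}$ once the decay is known, yielding (\ref{pinching on the whole manifold})), concludes that $(N,g_N(t))$ is a Ricci-pinched $\kappa$-solution, and only then invokes the classification Theorem \ref{classification of flow} --- the content of Section 2, which requires Perelman's asymptotic shrinker \cite{Pe}, the maximum principle, Munteanu--Wang compactness \cite{Wa}, Wilking and B\"ohm--Wilking \cite{Wi}, \cite{BW}, Huisken's pinching preservation \cite{Hu}, and K\"ahler--Ricci flow stability \cite{Zh} --- together with $N\cong\mathbb{S}^{n-1}$ from Lemma \ref{topology of level set} (Morse theory; your alternative argument that nonnegative curvature operator plus positive Ricci forces a sphere also fails, e.g.\ for $\mathbb{CP}^m$). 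Without transporting the pinching to the limit and without a classification of pinched $\kappa$-solutions, the blow-up limit could a priori be $\mathbb{R}$ times a non-round compact ancient solution, and neither (i) nor (ii) of Definition \ref{brendle} would follow.
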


Steady Ricci solitons  with $\epsilon$-pinched  Ricci curvature  have been studied by many people \cite{H3}, \cite{CZ}, \cite{Ni}, \cite{DZ1}, etc.. For example,  Ni proves that any  steady Ricci soliton with $\epsilon$-pinched  Ricci curvature and nonnegative sectional curvature must be flat \cite{Ni}. For steady K\"ahler-Ricci solitons, the authors prove that Ni's result is still true even without nonnegative sectional curvature condition \cite{DZ1}.  In order to study asymptotic behavior of  steady Ricci solitons  with  horizontally  $\epsilon$-pinched  Ricci curvature, we will give a  classification of  $\kappa$-solutions  with $\epsilon$-pinched  Ricci curvature in Section 3.

In the proof of Theorem \ref{theorem-pinching case}, we essentially show that  for any sequence $p_{i}\rightarrow\infty$, there exists a subsequence $p_{i_{k}}\rightarrow\infty$ such that
\begin{align*}
(M,g_{p_{i_{k}}}(t),p_{i_{k}})\rightarrow(\mathbb{R}\times\mathbb{S}^{n-1},\widetilde{g}(t),p_{\infty}),~for~t\in(-\infty,1),
\end{align*}
where $g_{p_{i_{k}}}(t)=R(p_{i_{k}})g(R^{-1}(p_{i_{k}})t)$ and $(\mathbb{R}\times\mathbb{S}^{n-1},\widetilde{g}(t))$ is a shrinking cylinders flow, i.e.
$$\widetilde{g}(t)=dr^2+(n-2)[(n-1)-2t]g_{\mathbb{S}^{n-1}}.$$
It is interesting to mention that  as one of steps in  the  proof of Theorem \ref{theorem-pinching case}, we prove that
 scalar curvature  of $g$  decays uniformly (cf. Corollary  \ref{uniform curvature decay}).    We say that    scalar curvature  $R(x)$  of a Riemannian manifold $(M,g)$ decays uniformly if
\begin{align}\label{r-decay-uniform} |R(x)|\rightarrow0, ~as~\rho(x)\rightarrow\infty.
\end{align}
By the way, we also point  that the Ricci curvature pinching condition of $\bar g$ in (\ref{pinching along level set}) in Theorem \ref{theorem-pinching case}  can be replaced by (\ref{m-pinching}) for the ambient metric $g$ if (\ref{r-decay-uniform}) is true (cf. Corollary \ref{theorem-original pinching condition}).

As an application of Theorem \ref{theorem-pinching case}, we  study  $\kappa$-noncollapsed and positively curved steady Ricci solitons with a linear curvature decay,
 \begin{equation}\label{special curvature decay}
\lim_{\rho(x)\rightarrow\infty}R(x)f(x)=C_0\sup_{x\in M}R(x),
\end{equation}
 where  $C_0>0$ is a constant.   It is known  that there are constants $c_1, c_2>0$ such that
 \begin{align}\label{cao}c_1 \rho(x)\le f(x)\le c_2 \rho(x),~\rho(x)\ge r_0,
 \end{align}
 if a  steady Ricci soliton has positive Ricci curvature and an equilibrium point (cf. \cite{CaCh}).
By  check of      pinching condition of  horizontal  Ricci curvature,   we are able to prove

\begin{theo}\label{main-theorem-1} Any
$n$-dimensional $\kappa$-noncollapsed steady Ricci soliton $(M,$
\newline $g,f)$ with nonnegative  curvature operator and a unique equilibrium point must be rotationally symmetric if   scalar curvature $R(x)$ of $g$  satisfies (\ref{special curvature decay}) with $C_0>\frac{n-2}{2}$.
\end{theo}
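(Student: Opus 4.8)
The plan is to derive Theorem~\ref{main-theorem-1} from Theorem~\ref{theorem-pinching case} by showing that the linear decay hypothesis (\ref{special curvature decay}) forces the soliton to be horizontally $\epsilon$-pinched. First, by (\ref{cao}) we have $f(x)\sim\rho(x)\to\infty$, so (\ref{special curvature decay}) gives $R(x)\sim C_0(\sup R)/f(x)\to0$; hence $R$ decays uniformly, and by the remark following Theorem~\ref{theorem-pinching case} it suffices to verify the ambient pinching (\ref{m-pinching}), i.e. to bound ${\rm Ric}(v,v)$ below by $\epsilon R|v|^2$ on the horizontal distribution $v\perp\nabla f$. To set up the level-set geometry, write $\nu=\nabla f/|\nabla f|$. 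The steady soliton equation ${\rm Ric}=\nabla^2 f$ yields $R+|\nabla f|^2=\sup R=:R_{\max}$ and ${\rm Ric}(\nabla f)=-\tfrac12\nabla R$, so $|\nabla f|\to\sqrt{R_{\max}}$ and the second fundamental form of $\Sigma_r$ is $h={\rm Ric}|_{T\Sigma_r}/|\nabla f|$.

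Next I would reduce the horizontal pinching to an ambient statement. The normal Ricci is ${\rm Ric}(\nu,\nu)=-\partial_\nu R/(2|\nabla f|)$, and feeding in $R\sim C_0R_{\max}/f$ gives ${\rm Ric}(\nu,\nu)=O(R^2)=o(R)$. Since the curvature operator is nonnegative, $R(v,\nu,\nu,v)\le{\rm Ric}(\nu,\nu)|v|^2=o(R)$, while the shape-operator terms $Hh-h^2$ appearing in the Gauss equation are $O(R^2)$. Thus for $v\in T\Sigma_r$ the Gauss equation collapses to $\overline{\rm Ric}(v,v)={\rm Ric}(v,v)+o(R)|v|^2$, and the pinching (\ref{pinching along level set}) becomes equivalent to the ambient bound ${\rm Ric}(v,v)\ge\epsilon R|v|^2$ for $v\perp\nabla f$.

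The crux, and the step I expect to be the main obstacle, is producing this lower bound from the threshold $C_0>\frac{n-2}{2}$; the point is that $C_0$ detects the asymptotic dimension of the spherical cross-sections. Along the normal flow the induced metric evolves by $\partial_s\bar g=2h$, so if near infinity the cross-sections are round $m$-spheres of radius $\phi$ (where ${\rm Ric}|_{T\Sigma}\approx\frac{m-1}{\phi^2}\bar g$ and $R\approx\frac{m(m-1)}{\phi^2}$), then $\frac{d}{ds}\phi^2=\frac{2(m-1)}{|\nabla f|}\to\frac{2(m-1)}{\sqrt{R_{\max}}}$, whence $\phi^2\sim\frac{2(m-1)}{\sqrt{R_{\max}}}\,s$; combined with $f\sim\sqrt{R_{\max}}\,s$ this forces $Rf\to\frac{m}{2}R_{\max}$, i.e. $C_0=\frac{m}{2}$. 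Therefore $C_0>\frac{n-2}{2}$ compels $m=n-1$, so no horizontal direction can degenerate. Rigorously I would argue by contradiction: if there were $p_i\to\infty$ and horizontal unit vectors $v_i$ with ${\rm Ric}(v_i,v_i)/R(p_i)\to0$, then the rescaled flows $g_{p_i}(t)=R(p_i)g(R(p_i)^{-1}t)$ would converge to a $\kappa$-solution possessing two independent null directions of the Ricci tensor (the limit of $\nu$, already null by the previous paragraph, together with the limit of $v_i$); by Hamilton's strong maximum principle this limit splits off $\mathbb{R}^2$, so its curved factor has dimension at most $n-2$, and the computation above then gives $C_0\le\frac{n-2}{2}$, contradicting the hypothesis. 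This yields a uniform $\epsilon>0$ with ${\rm Ric}(v,v)\ge\epsilon R|v|^2$ on $T\Sigma_r$ for all large $r$, hence horizontal $\epsilon$-pinching, and Theorem~\ref{theorem-pinching case} gives rotational symmetry. The genuine difficulty lies precisely in making this last step quantitative, namely converting the non-degeneracy of every blow-up limit into a single pinching constant $\epsilon$ valid on all sufficiently distant level sets.
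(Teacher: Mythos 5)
Your overall strategy---reduce to Theorem \ref{theorem-pinching case} by verifying horizontal pinching, then argue by contradiction via blow-up limits---is the same as the paper's, and both your Gauss-equation reduction and your heuristic that $C_0$ equals half the dimension of the asymptotic cross-section are on target. But there is a genuine gap at the crux. In your contradiction argument, after splitting off $\mathbb{R}^2$ you invoke ``the computation above'' to conclude $C_0\le\frac{n-2}{2}$. That computation was carried out under the assumption that the level sets are asymptotically \emph{round} $m$-spheres with ${\rm Ric}|_{T\Sigma}\approx\frac{m-1}{\phi^2}\bar g$; in the degenerate scenario the cross-sections of the limit are precisely \emph{not} round (they split a line), so the computation does not apply, and nothing in your argument connects the hypothesis (\ref{special curvature decay}) to the curvature of the blow-up limit. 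The missing bridge is exactly the paper's Lemma \ref{lem-for main theorem-1}: under (\ref{special curvature decay}), \emph{every} rescaled limit flow has spatially constant scalar curvature $R^{(\infty)}(x,t)=C_0/(C_0-t)$, proved by tracking $f$ along the integral curves of $\nabla f$ through the Cheeger--Gromov convergence (one cannot simply differentiate the asymptotic relation $Rf\to C_0R_{\max}$). Plugging this into the evolution equation $\partial_t R^{(\infty)}=\Delta R^{(\infty)}+2|{\rm Ric}^{(\infty)}|^2$ yields the pointwise identity $(R^{(\infty)})^2=2C_0\,|{\rm Ric}^{(\infty)}|^2$ (the paper's (\ref{eq:eigenvalue})), and only then does Cauchy--Schwarz over the nonzero Ricci eigenvalues do the work: if Ricci were null in two independent directions, then $(R^{(\infty)})^2\le(n-2)|{\rm Ric}^{(\infty)}|^2$, forcing $2C_0\le n-2$. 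Without this identity (or an equivalent), your contradiction does not close.

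Two further remarks. First, the same objection applies, more mildly, to your claim ${\rm Ric}(\nu,\nu)=O(R^2)$: you obtain it by ``feeding in'' $R\sim C_0R_{\max}/f$ and differentiating, which is not legitimate for an asymptotic relation; the paper instead writes $2|\nabla f|^2{\rm Ric}(\nu,\nu)=|\langle\nabla R,\nabla f\rangle|=|\Delta R+2|{\rm Ric}|^2|$ and quotes the estimate $|\Delta R|\le CR^2$ from Proposition 3.8 of \cite{DZ2}. Second, you locate the ``genuine difficulty'' in converting non-degeneracy of each limit into a single uniform pinching constant $\epsilon$; in fact that step is routine once one has the identity above, since (\ref{eq:eigenvalue}) together with $C_0>\frac{n-2}{2}$ gives a \emph{quantitative} eigenvalue pinching $\lambda_1/\lambda_{n-1}\ge\delta(n)$ on every limit (the paper's (\ref{pinching inequality})), which transfers back to the soliton for large $i$ and contradicts the chosen degenerating sequence directly. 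The real difficulty is the step you skipped: relating $C_0$ to the Ricci tensor of an arbitrary, not-necessarily-round, blow-up limit.
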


The paper is organized as follows.  In Section 2, we give a classification of  $\kappa$-solutions with $\epsilon$-pinched  Ricci curvature. Theorem \ref{theorem-pinching case} and
 Theorem \ref{main-theorem-1} will be proved in Section 3, 4 respectively.  Some related estimates  for steady  Ricci solitons  are used  from  \cite{DZ2}.

\noindent {\bf Acknowledgements.}   The authors   are  grateful  to  referees  for   a careful reading and  many valuable suggestions to  the  paper.   Without their  help, the present version of paper couldn't  be finished.

\section{Classification of Ricci pinched $\kappa$-solutions}

Throughout the paper, we say that $(M,g,f)$ is a (gradient) steady Ricci soltion if
$$R_{ij}=\nabla_{i}\nabla_{j}f.$$
 The following lemma shows that   each level set $\Sigma_r$  of $(M,g,f)$  is diffeomorphic to a sphere if   it  admits an equilibrium point
 and has positive Ricci curvature.

\begin{lem}\label{topology of level set}
Let $(M,g,f)$ be an $n$-dimensional noncompact steady  Ricci soliton with positive Ricci curvature and an equilibrium point $o$. Then  any  $\Sigma_r$ is diffeomorphic to $\mathbb{S}^{n-1}$, where $r>f(o)$.
\end{lem}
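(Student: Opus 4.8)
The plan is to realize $f$ as a proper, strictly convex exhaustion function possessing a single critical point, and then to read off the diffeomorphism type of its regular level sets by Morse theory.

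First I would combine the soliton equation $R_{ij}=\nabla_i\nabla_j f$ with the hypothesis $\mathrm{Ric}>0$ to obtain $\mathrm{Hess}(f)=\mathrm{Ric}>0$ everywhere, so that $f$ is strictly convex along every geodesic. It follows that any critical point of $f$ is a non-degenerate local minimum (index $0$), and that such a point is unique: if $p\neq q$ were both critical, the restriction of $f$ to a minimizing geodesic joining them (which exists by completeness) would be a strictly convex function of one variable with two critical points, a contradiction. Since $\nabla f(o)=0$, the equilibrium point $o$ is this unique critical point and the global minimum of $f$. In particular $\nabla f\neq 0$ on $\{f>f(o)\}$, every $r>f(o)$ is a regular value, and each $\Sigma_r$ is a smooth embedded hypersurface.

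Next I would show that $f$ is proper, so that the level sets are compact. Here I invoke the Cao--Chen linear bound (\ref{cao}), which holds under precisely the present hypotheses (positive Ricci curvature and an equilibrium point): from $c_1\rho(x)\le f(x)$ one gets $f(x)\to\infty$ as $\rho(x)\to\infty$, so the sublevel sets $\{f\le r\}$ are bounded and hence compact by completeness. Consequently every $\Sigma_r$ with $r>f(o)$ is compact, and each slab $\{r_1\le f\le r_2\}$ with $f(o)<r_1<r_2$ is a compact region containing no critical point.

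Finally I would run the standard Morse argument. Because $o$ is a non-degenerate minimum, passing this critical level attaches a $0$-handle: for small $\epsilon>0$ the sublevel set $\{f\le f(o)+\epsilon\}$ is diffeomorphic to the closed $n$-disk, so its boundary $\Sigma_{f(o)+\epsilon}$ is diffeomorphic to $\mathbb{S}^{n-1}$. For arbitrary $r>f(o)$ there are no critical values in $(f(o)+\epsilon,r]$, so the normalized gradient flow of $\nabla f/|\nabla f|^2$---which is smooth and complete on the compact slab $\{f(o)+\epsilon\le f\le r\}$, since $|\nabla f|$ is bounded below by a positive constant there---carries $\Sigma_{f(o)+\epsilon}$ diffeomorphically onto $\Sigma_r$. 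Hence $\Sigma_r\cong\mathbb{S}^{n-1}$ for every $r>f(o)$. The step I expect to be the crux is the properness/compactness input: on a noncompact manifold the gradient flow between two level sets could otherwise escape to infinity and the Morse argument would collapse. Once compact level sets are secured through (\ref{cao}), the remainder is routine convexity together with the handle description of a non-degenerate minimum.
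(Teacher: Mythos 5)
Your proof is correct and follows essentially the same route as the paper's: strict convexity of $f$ from the soliton equation, the Morse lemma at the unique non-degenerate minimum $o$ to identify $\Sigma_{f(o)+\epsilon}$ with $\mathbb{S}^{n-1}$, and the gradient flow of $f$ to carry this level set diffeomorphically onto every $\Sigma_r$. Your only real addition is the explicit properness/compactness step via (\ref{cao}), which the paper's proof leaves implicit when it asserts that the flow of $-\nabla f$ identifies the level sets; that is a worthwhile point of care rather than a genuinely different approach.
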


\begin{proof}
By  soliton equation and  positivity of Ricci curvature,  $f$ is strictly convex.  Moreover,  the equilibrium point is unique. Thus by  the Morse Lemma,  there exists   a coordinates system   $(x_{1},\cdots,x_{n})$ in a neighbourhood $U$ near $o$ such that
$$f(x)=f(o)+x_{1}^{2}+\cdots+x_{n}^{2}.$$
 It  follows that $\Sigma_{f(o)+\epsilon}=\{x\in M| ~f(x)=f(o)+\epsilon\}$ is diffeomorphic to $\mathbb{S}^{n-1}$ for  any small $\epsilon>0$.
  Since $\Sigma_r$ is evolved along the gradient flow of $-\nabla f$,  each  $\Sigma_r$ is diffeomorphic to  $\Sigma_{f(o)+\epsilon}$, where $r>f(o)+\epsilon$. The lemma is proved.
\end{proof}

This lemma will play an important role in the analysis of the asymptotic behavior of steady Ricci solitons below. In this section, we give a classification of steady  Ricci solitons with $\epsilon$-pinched  Ricci curvature. Recall

\begin{defi}
A Riemannian manifold $(M,g)$ is called  Ricci  curvature $\epsilon$-pinched,  if it has nonnegative Ricci curvature and its Ricci curvature satisfies
\begin{align}
{\rm Ric}(x)\geq \epsilon R(x)g(x),
\end{align}
where $\epsilon$ is a positive constant independent of $x\in M$.  Similarly, a Ricci flow $(M,g(t))$ on $t\in (a,b)$ is called  Ricci  curvature $\epsilon$-pinched,   if it has nonnegative Ricci curvature along the flow and its  Ricci  curvature satisfies
\begin{align}
{\rm Ric}(x,t)\geq \epsilon R(x,t)g(x,t),~\forall  ~t\in (a,b).
\end{align}
where $\epsilon$ is a positive constant independent of $x\in M$ and $t\in (a,b)$.
\end{defi}

we prove

\begin{theo}\label{classification of flow}
Let $(M,g(t))$ be a simply connected and Ricci curvature  $\epsilon$-pinched $\kappa$-solution.  Then $(M,g(t))=(M_{1},g_{1}(t))\times(M_{2},g_{2}(t))\times\cdots\times (M_{k},g_{k}(t))$, and each $(M_{i},g_{i}(t))$ is an  Einstein metrics  flow on a simply connected and compact symmetric space.
 More precisely,  each $(M_{i},g_{i}(t))$ is   one of the following three types:
\begin{enumerate}
\item[(i)] $(\mathbb{S}^{n_{i}}, g(t))$  is  a  Ricci flow with positive  constant sectional curvature;
\item[(ii)] $(\mathbb{CP}^{n_{i}} , g(t))$ is a  Ricci flow with  constant  positive  bisectional curvature;
\item[(iii)]$(M_i, g(t))$  is   an Einstein metrics flow  on an  irreducible symmetric space except (i) and (ii).
\end{enumerate}
\end{theo}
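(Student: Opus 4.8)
The plan is to combine Hamilton's strong maximum principle for the curvature operator with the de Rham decomposition and Berger's holonomy theorem, using the $\epsilon$-pinching to discard all degenerate cases. First I would record that a $\kappa$-solution is a nonflat ancient solution with bounded nonnegative curvature operator, so the evolution $\partial_t R=\Delta R+2|\mathrm{Ric}|^2$ together with the strong maximum principle gives $R>0$ everywhere; the pinching $\mathrm{Ric}\ge \epsilon R g$ then upgrades this to $\mathrm{Ric}>0$. In particular there is no direction of vanishing Ricci curvature, which already excludes flat or cylindrical behaviour.

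Next I would apply Hamilton's strong maximum principle to the curvature operator on $\Lambda^2 TM\cong\mathfrak{so}(TM)$: for a solution with nonnegative curvature operator the image is, for $t$ slightly below any fixed time, a smooth parallel subbundle whose fibres form a Lie subalgebra equal to the restricted holonomy algebra, on which the curvature operator is positive. Since $M$ is simply connected, the de Rham theorem splits $M$ isometrically as $M_{1}\times\cdots\times M_{k}$ with each factor of irreducible holonomy; because the Ricci tensor respects the parallel distributions, the flow preserves this splitting, so each $(M_{i},g_{i}(t))$ is again an ancient solution with nonnegative curvature operator and, by the previous step, strictly positive Ricci. Positivity of $\mathrm{Ric}$ rules out a Euclidean factor.

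The heart of the argument is the classification of one irreducible factor. By Berger's theorem either $M_{i}$ is locally symmetric or its restricted holonomy is one of $\mathrm{SO}(n_{i}),\mathrm{U}(m_{i}),\mathrm{SU}(m_{i}),\mathrm{Sp}(m_{i}),\mathrm{Sp}(m_{i})\mathrm{Sp}(1),G_{2},\mathrm{Spin}(7)$. The holonomies $\mathrm{SU},\mathrm{Sp},G_{2},\mathrm{Spin}(7)$ force Ricci-flatness and are excluded by $\mathrm{Ric}>0$, while the quaternion-Kähler case $\mathrm{Sp}(m_{i})\mathrm{Sp}(1)$ has positive scalar curvature and, with nonnegative curvature operator, is a symmetric Wolf space. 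An irreducible locally symmetric factor with positive Ricci is of compact type, hence compact and automatically Einstein, giving case (iii). There remain the two genuinely non-symmetric possibilities. For holonomy $\mathrm{SO}(n_{i})$ the curvature operator is positive definite, and for holonomy $\mathrm{U}(m_{i})$ the factor is Kähler with positive bisectional curvature; in both cases I would invoke the rigidity of $\kappa$-solutions. Concretely, Perelman's asymptotic shrinking soliton of $(M_{i},g_{i}(t))$ inherits nonnegative curvature operator, positivity of Ricci and the pinching; the pinching forces it to be compact, a compact shrinking soliton with positive curvature operator is a round sphere by the convergence theorem of Böhm--Wilking (equivalently Tachibana's theorem, once the Einstein property is in hand), and the Kähler analogue is the Fubini--Study metric on $\mathbb{CP}^{m_{i}}$ of constant holomorphic sectional curvature. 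Pulling this back along the flow identifies $(M_{i},g_{i}(t))$ with a shrinking round $\mathbb{S}^{n_{i}}$ or $\mathbb{CP}^{m_{i}}$, namely cases (i) and (ii). Since each factor is Einstein, its Ricci flow is a pure rescaling, i.e.\ an Einstein metrics flow, completing the proof.

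I expect the main obstacle to lie in this last step: showing that the non-symmetric irreducible factors are exactly round spheres and Fubini--Study $\mathbb{CP}^{m_{i}}$, since positive curvature operator alone does not characterise the round sphere. This is precisely where the $\kappa$-solution hypothesis is essential: one must pass to the asymptotic shrinking soliton, use the $\epsilon$-pinching to guarantee its compactness by ruling out noncompact or cylindrical ends carrying a degenerate Ricci direction, and then combine the Böhm--Wilking/Brendle--Schoen convergence with self-similarity to force constant (bisectional) curvature. The delicate points I would verify most carefully are that the pinching condition $\mathrm{Ric}\ge\epsilon R g$ is genuinely preserved along the flow and survives the rescaling limit, the compactness of the asymptotic soliton, and the exclusion of the non-symmetric special-holonomy cases.
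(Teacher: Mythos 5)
Your decomposition step takes a genuinely different route from the paper's. The paper first applies Perelman's Theorem 11.3 to $M$ itself: the asymptotic shrinking soliton inherits the $\epsilon$-pinching, hence has strictly positive Ricci curvature (otherwise the scalar curvature vanishes somewhere and the strong maximum principle forces flatness), hence is compact by Munteanu--Wang; the paper then shows $M$ is compact and diffeomorphic to this limit, and only afterwards invokes Wilking's classification of compact manifolds with nonnegative curvature operator to obtain the product of the three types. You instead split first, via Hamilton's strong maximum principle, de Rham, and Berger. That is a defensible alternative, but it creates a problem the paper's ordering is designed to avoid: to run Perelman's asymptotic-soliton argument on a single factor $(M_i,g_i(t))$ you must know that this factor is itself a $\kappa$-solution, and $\kappa$-noncollapsedness does not pass to factors of a product in any obvious way (a curvature bound on a ball in $M_i$ gives no curvature bound on the other factor, so the noncollapsing hypothesis on $M$ cannot be applied at that scale). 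You nowhere address this, whereas the paper extracts the soliton limit from the full $\kappa$-solution $M$ before any splitting takes place.

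The genuine gap, however, is in your last step. Knowing that the backward blow-down (asymptotic shrinking soliton) of a factor is the round sphere, respectively Fubini--Study, does not identify the flow at finite times: ``pulling this back along the flow'' is not an operation, and the appeal to ``self-similarity'' is empty because a $\kappa$-solution is not assumed to be a soliton --- that is precisely what must be proved. What is needed is a forward-in-time rigidity statement to combine with backward roundness. In the paper, case (i) uses Huisken's theorem (Theorem 3.1 in \cite{Hu}; see also \cite{BW}) that the scale-invariant curvature pinching improves along the flow, so that being $\epsilon_i$-close to round at times $-\tau_i\to-\infty$ with $\epsilon_i\to 0$ forces exact roundness at every fixed time; case (ii) has no such pointwise pinching quantity, and the paper instead normalizes to the K\"ahler--Ricci flow on $\mathbb{CP}^{n}$ in the class $2\pi c_1$ and invokes Zhu's stability theorem \cite{Zh} for the K\"ahler--Ricci flow near a K\"ahler--Einstein metric. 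Your citation of B\"ohm--Wilking/Brendle--Schoen convergence cannot substitute for this: those are forward convergence theorems for the (normalized) flow and give no rigidity at a fixed finite time. You correctly flagged this step as the delicate one, but the specific missing ingredients --- monotonicity of pinching in the sphere case, K\"ahler--Ricci flow stability in the $\mathbb{CP}^n$ case --- are exactly what your sketch does not supply, so the proof does not close as written.
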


To prove Theorem \ref{classification of flow}, we need the following classification result.

\begin{lem}\label{lem-einstein and shrinking}
\begin{enumerate}\ \\
\item[i)] A simply connected Einstein manifold with positive curvature operator must be $\mathbb{S}^{n}$ with constant sectional curvature.
\item[ii)] A simply connected shrinking soliton with positive curvature operator must be $\mathbb{S}^{n}$ with constant sectional curvature.
\item[iii)] A  simply connected K\"{a}hler-Einstein manifold with positive bisectional curvature must be $\mathbb{CP}^{n}$ with constant bisectional curvature.

\item[iv)] A simply connected K\"{a}hler-Ricci soliton with positive bisectional curvature must be $\mathbb{CP}^{n}$ with constant bisectional curvature.
\end{enumerate}
\end{lem}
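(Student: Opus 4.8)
The plan is to reduce each of the four statements to a known rigidity or classification theorem, the common first step being to upgrade the positivity hypothesis to compactness. In every case positivity of the curvature operator (respectively of the holomorphic bisectional curvature) forces the Ricci curvature to be strictly positive; combined with the Einstein or soliton equation this yields a uniform positive lower bound for the (Bakry-\'Emery) Ricci curvature, so that a Bonnet-Myers type argument gives compactness. Once compactness is in hand, each part matches a classical theorem.

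For (i), I would invoke Tachibana's theorem: a compact Einstein manifold with positive curvature operator has constant sectional curvature. Being simply connected and of constant positive sectional curvature, $M$ is then isometric to the round sphere $\mathbb{S}^{n}$ by the classification of space forms. For (ii), I would first note that a complete gradient shrinking soliton with positive curvature operator is compact: the structure theory of shrinking solitons with nonnegative curvature operator shows that a noncompact example must split off a line or carry a flat product factor, which would put a kernel into the curvature operator and contradict strict positivity. On a compact shrinking soliton with positive curvature operator one concludes --- for instance by running the Ricci flow, which preserves positive curvature operator (Hamilton, B\"ohm-Wilking) and for a self-similar solution merely rescales the metric --- that the metric has constant sectional curvature, so again $M=\mathbb{S}^{n}$; alternatively one shows the soliton is Einstein and reduces to (i).

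For the K\"ahler statements (iii) and (iv), the engine is the resolution of the Frankel conjecture by Mori and by Siu-Yau: a compact K\"ahler manifold with positive holomorphic bisectional curvature is biholomorphic to $\mathbb{CP}^{n}$. Thus in (iii) the underlying manifold is $\mathbb{CP}^{n}$, and by the uniqueness of K\"ahler-Einstein metrics on Fano manifolds (Bando-Mabuchi) the metric is, up to scaling and automorphism, the Fubini-Study metric, which has constant holomorphic bisectional curvature. For (iv) the same Frankel-conjecture input identifies the manifold with $\mathbb{CP}^{n}$; one then shows that a shrinking K\"ahler-Ricci soliton structure on $\mathbb{CP}^{n}$ is necessarily K\"ahler-Einstein --- the soliton vector field can be normalized away, by the uniqueness theory of K\"ahler-Ricci solitons of Tian-Zhu --- thereby reducing (iv) to (iii).

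The main obstacle I expect is the soliton cases (ii) and (iv), and specifically the passage from positivity to compactness: for Einstein metrics Bonnet-Myers applies directly, but for solitons one must genuinely use the structure theory to exclude the noncompact shrinkers (the cylinder and its relatives), all of which fail \emph{strict} positivity of the curvature operator. The K\"ahler soliton case (iv) additionally requires the rigidity statement that no nontrivial shrinking K\"ahler-Ricci soliton lives on $\mathbb{CP}^{n}$, which I would extract from the uniqueness theory for K\"ahler-Ricci solitons rather than reprove from scratch.
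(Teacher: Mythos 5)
Your overall architecture (compactness first, then a classical rigidity theorem) is the same as the paper's, and three of your four reductions are sound: for (i) Tachibana's theorem is a legitimate shortcut where the paper instead runs the normalized Ricci flow and uses B\"ohm--Wilking convergence together with self-similarity (\cite{Wi}, \cite{BW}); for (iii) you use exactly the paper's Frankel-conjecture-plus-Bando--Mabuchi argument \cite{BM}; and for (iv) your route through the uniqueness theory of K\"ahler--Ricci solitons (killing the soliton vector field on $\mathbb{CP}^{n}$, then invoking (iii)) is valid and in fact more explicit than the paper's terse appeal to \cite{BM}, which strictly speaking applies only once the soliton is known to be Einstein --- the paper patches this by citing the Ricci-flow proof of Chen--Sun--Tian \cite{CST}.

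The genuine gap is your compactness step in (ii). The Bakry--\'Emery Bonnet--Myers heuristic fails for shrinkers (the Gaussian soliton has $\mathrm{Ric}+\nabla^{2}f=\tfrac{1}{2}g$ and is noncompact), and you acknowledge this, but your proposed substitute --- ``the structure theory shows a noncompact shrinker with nonnegative curvature operator must split off a line or a flat factor, contradicting strict positivity'' --- is not an available theorem, and the reasoning is circular. The strong maximum principle produces a splitting only \emph{when the curvature operator has a nontrivial kernel somewhere}; under the hypothesis of strictly positive curvature operator that mechanism is vacuous, so it cannot be used to exclude noncompact examples. (Classifications of noncompact shrinkers with nonnegative curvature operator of this splitting type are known only with additional hypotheses, e.g.\ in dimension four with bounded curvature.) The nonexistence of noncompact shrinkers with positive curvature is precisely the content of Munteanu--Wang's theorem \cite{Wa} --- a quantitative result proved via potential-function and volume estimates, not soft holonomy splitting --- and this is exactly the citation the paper uses: shrinking solitons with nonnegative sectional and positive Ricci curvature are compact. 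Replacing your splitting claim by an appeal to \cite{Wa} repairs the argument; after that, your flow/self-similarity conclusion for (ii) coincides with the paper's.
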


\begin{proof}
i) It is clear that the manifold   is compact. Let  $g(t)=(1-2\lambda t)g$, where $n\lambda$ is  scalar curvature of $g$. Then  $g(t)$ satisfies Ricci flow equation,
 \begin{align}\label{eq:1}
         \frac{\partial g(t)}{\partial t} &= -2{\rm Ric}(g(t)),
                  \end{align}
By Wilking's result \cite{Wi},   there exists $r(t)$ such that
\begin{align}
r(t)g(t)\rightarrow g_{\mathbb{S}^{n}}, ~{\rm as}~t\to\frac{1}{2\lambda},
\end{align}
where $g_{\mathbb{S}^{n}}$ is the metric with constant sectional curvature $1$ on  $\mathbb{S}^{n}$.
 Thus  $g$ must be  a metric with constant sectional curvature on $\mathbb S^n$.

ii) By Munteanu and Wang's result \cite {Wa},  any shrinking soliton with nonnegative  curvature and positive Ricci curvature must be compact. Then by the  same argument as in i), the manifold should be isometric to one with constant sectional curvature on $\mathbb S^n$.

iii) and iv) are known, since the manifold  is compact and so it is  biholomorphic to $\mathbb{CP}^{n}$ by Frankel's conjecture. By the uniqueness of   K\"{a}hler-Einstein metrics \cite{BM}, $(M,g)$  has constant bisectional curvature. Another proof can come from  \cite{CST} by using the Ricci flow.

\end{proof}

\begin{proof}[Proof of  Theorem \ref{classification of flow}]
By Theorem 11.3 for $\kappa$-solutions  in \cite{Pe}, there exists a sequence $(M,
\tau_{i}^{-1}g(t-\tau_i),p_{\tau_i})$  ($\tau_{i}\to\infty$ )  converging  to a limit $(M_{\infty},g_{\infty}(t),p_{\infty})$, which is a non-flat shrinking solitons  solution with nonnegative curvature operator.   Since the pinching condition is preserved under rescaling, $(M_{\infty},g_{\infty}(t))$ is also $\epsilon$-Ricci pinched.   We claim that   Ricci curvature of $g_{\infty}(t)$ is strictly positive.
If the claim is not true,  Ricci curvature of $(M_{\infty},g_{\infty}(t))$ vanishes along some direction somewhere.  Then scalar curvature of the soliton is zero at some point $p\in M_\infty$ and some time  $t_0$ by    pinched $\epsilon$-Ricci  curvature property.   We may assume that $(M_{\infty},g_{\infty}^{\prime}(\tau))$ is  a  Ricci flow generated by the non-flat shrinking Ricci soliton $(M_{\infty},g_\infty(t_0))$ which satisfies
 \begin{align}
         \frac{\partial g_{\infty}^{\prime}(\tau)}{\partial \tau} &= -2{\rm Ric^{\prime(\infty)}}(\tau),~\tau\in (-\infty,1),\notag
                  \end{align}
and  scalar curvature $R^{\prime(\infty)}(\tau)$ is zero at $(p,t_0')$.
It turns  that  $R^{\prime(\infty)}(\tau)$ satisfies  evolution equation
$$\frac{\partial R^{\prime(\infty)}(\tau)}{\partial \tau}=\Delta R^{\prime(\infty)}(\tau) +2|{\rm Ric^{\prime(\infty)}}(\tau)|^{2}.$$
Note that $R^{\prime(\infty)}(\tau)$ attains its minimum  at $(p,t_0')$ for some $t_0'<1$.  Thus
 $R^{\prime(\infty)}(t)\equiv 0$ by the maximum principle. Namely,  $(M_{\infty},g_{\infty}(t_0))$ is flat. This is impossible!

 Applying Munteanu and Wang's result  for  shrinking solitons  with nonnegative  sectional curvature and positive Ricci curvature  \cite{Wa}, we know that $(M_{\infty},g_{\infty}(t))$ is compact.
Next we show that $M$ is also compact and it is diffeomorphic to $M_{\infty}$. Let $g_{\tau}=\tau^{-1}g(t_{0}-\tau)$, for $\tau\in (1,\infty)$. By the convergence, there exists a sequence of isometries $\Phi_{\tau_i}:B(p_{\tau_i},r_{\tau_i}; g_{\tau_i})\rightarrow \Phi_{\tau_i}(B(p_{\tau_i},r_{\tau_i}); g_{\tau_i})(\subset M_{\infty})$ such that $\Phi_{\tau_i}(B(p_{\tau_i},r_{\tau_i}; g_{\tau_i}))$ exhaust $M_{\infty}$ as $r_{\tau_i}\rightarrow\infty$. Since $M_{\infty}$ is compact, $\Phi_{\tau_i}(B(p_{\tau_i}, r_{\tau_i}; g_{\tau_i}))= M_{\infty}$, for $\tau$ large enough. Hence, $B(p_{\tau_i},r_{\tau_i}; g_{\tau_i})$ is diffeomorphic to $M_{\infty}$ and it is compact without boundary. As a consequence, $B(p_{\tau_i},r_{\tau_i}; g_{\tau_i})$ is a complete manifold and it must cover $M$. Hence  $M$ is compact and  diffeomorphic to $M_{\infty}$.
By a classification theorem of  compact manifolds with nonnegative curvature operator \cite{Wi},  $(M,g(t))$ is isometric to flow $(M_{1},g_{1}(t))\times(M_{2},g_{2}(t))\times\cdots\times (M_{k},g_{k}(t))$, where each $(M_{i},g_{i}(t))$ is one of the following three types:
\begin{enumerate}
\item[(i)]  Ricci flow on $\mathbb{S}^{n_{i}}$ with positive curvature operator;
\item[(ii)]Ricci flow on  $\mathbb{CP}^{n_{i}}$ with positive sectional curvature;
\item[(iii)]  Einstein metrics flow on a  symmetric space   except (i) and (ii).
\end{enumerate}
It remains  to deal with case (i) and (ii).
For case (i), we may assume that $M=\mathbb{S}^{n}$.   Then $(M_{\infty},g_{\infty}(t))$ is a round spheres flow by Lemma \ref{lem-einstein and shrinking}.  Thus   $(M,g_{\tau_i},p_{\tau_i})$  converge to a  round sphere as $\tau_i\to \infty$. On the other hand,   by Theorem 3.1 in \cite{Hu} (also see \cite{BW}),  the curvature pinching property  is preserved along the flow $(M,g(t))$.  Hence  $(M,g(t))$ is getting more and more round from largely  negative $t$.  Therefore, $(M,g(t))$ must be  a round sphere for all $t$.

For case (ii), we may assume that $M=\mathbb{CP}^{n}$.  Then $g(t)$ are all K\"ahler metrics.    Moreover, there is some $C_0>0$   such that  K\"ahler classes of  $\hat g(t)=C_0^{-1} e^t g( C_0(1-e^{-t}))$  are all $2\pi c_1(\mathbb{CP}^{n})$. It follows that
 $\hat g(t)$ satisfies the normalized K\"ahler-Ricci flow,
$$\frac{\partial \hat g}{\partial t}= -{\rm Ric}(\hat g)+\hat g,~t\in ~(-\infty,\infty).$$
By the convergence of $g_\tau$ and Lemma  \ref{lem-einstein and shrinking},  it is easy to see  that  there exists a sequence of  $\hat g(t_i)$ which converges to the Fubini-Study metric of  $\mathbb{CP}^{n}$ as $t_i\to -\infty$. Now we can apply the stability
result  for K\"ahler-Ricci flow near a  K\"{a}hler-Einstein metric in \cite{Zh} to conclude that $\hat g(t)$ is the the Fubini-Study metric for any $t$.  Hence, $ g(t)$ are all K\"ahler metrics with   positive  constant bisectional curvature.

\end{proof}

\section{Proof of Theorem \ref{theorem-pinching case}}

In  this section,  we  prove   Theorem \ref{theorem-pinching case}  under the
 condition (\ref{pinching along level set}). First,  we  recall  a result  of  asymptotic  behavior  for  $\kappa$-noncollapsed steady Ricci solitons with nonnegative curvature operator proved in Theorem 1.6, \cite{DZ2}.

\begin{theo}\label{theorem-soliton-real}
Let $(M,g, f)$ be an $n$-dimensional noncompact $\kappa$-noncollapsed steady Ricci soliton with a unique equilibrium point. Suppose that $M$ has nonnegative curvature operator and positive Ricci curvature.
Then, for any $p_{i}\rightarrow\infty$, the sequence of rescaled flows $(M,R(p_{i})g(R^{-1}(p_{i})t),p_{i})$ converges subsequently to a Ricci flow
$(\mathbb{R}\times N,\widetilde{g}(t))$ ( $t\in (-\infty,0]$) in the Cheeger-Gromov topology, where
$$\widetilde{g}(t)={\rm d}s\otimes{\rm d}s+g_{N}(t),$$
and $(N,g_{N}(t))$ is a   $\kappa$-noncollapsed Ricci flow  with nonnegative curvature operator  on $N$ with dimension $n-1$.
\end{theo}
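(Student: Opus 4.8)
The plan is to combine a Cheeger--Gromov--Hamilton compactness argument with a splitting theorem, using the steady soliton identities and $\kappa$-noncollapsing throughout. First I would normalize the soliton constant so that $R+|\nabla f|^2\equiv 1$, which forces $0\le R\le 1$ and $|\nabla f|^2=1-R$, and I would record the structural identities $\Delta f=R$ and $\nabla R=-2\,\mathrm{Ric}(\nabla f,\cdot)$. Since $M$ has nonnegative curvature operator, every sectional curvature, and hence $|\mathrm{Rm}|$, is controlled by the scalar curvature $R$; as $R$ attains its maximum at the unique equilibrium point $o$ and decreases along the integral curves of $\nabla f$ (because $\tfrac{d}{ds}R=\langle\nabla R,\nabla f\rangle=-2\,\mathrm{Ric}(\nabla f,\nabla f)\le 0$), the associated Ricci flow $g(t)=\phi_t^{\ast}g$ is an eternal solution of globally bounded, nonnegative curvature, i.e. an eternal $\kappa$-solution. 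Positive Ricci curvature guarantees $R(p_i)>0$, so the rescaling $g_{p_i}(t)=R(p_i)g(R^{-1}(p_i)t)$ is well defined and normalizes the scalar curvature to $1$ at the space-time point $(p_i,0)$.

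Next I would establish subsequential convergence. The key a priori input is a \emph{bounded curvature at bounded distance} estimate: for $q$ in a fixed $g_{p_i}$-ball about $p_i$ one has $R(q)\le C\,R(p_i)$, which follows from $\kappa$-noncollapsing together with nonnegativity of the curvature operator (Perelman's local curvature estimates for $\kappa$-solutions). After rescaling this yields uniform curvature bounds for $g_{p_i}(t)$ on compact subsets of space-time, while $\kappa$-noncollapsing supplies the injectivity-radius lower bound. Hamilton's compactness theorem then gives a subsequence converging in the pointed Cheeger--Gromov topology to an eternal limit flow $(M_{\infty},g_{\infty}(t),p_{\infty})$, $t\in(-\infty,0]$, which is $\kappa$-noncollapsed, has nonnegative curvature operator, and satisfies $R_{g_{\infty}}(p_{\infty},0)=1$, so it is nonflat.

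The heart of the argument is to show that $(M_{\infty},g_{\infty}(t))$ splits off an $\mathbb{R}$-factor, and I would produce a line in the time-$0$ slice from the integral curves of $\nabla f$. Reparametrized by $g$-arclength, with $e=\nabla f/|\nabla f|$, these curves satisfy $\nabla_{e}e=-\tfrac12\nabla R$, so their acceleration is governed by $|\nabla R|$, which becomes negligible at the rescaled scale as $p_i\to\infty$; hence the rescaled curves subconverge to a geodesic through $p_{\infty}$. Their minimality, i.e. that the limit is a genuine \emph{line}, I would deduce from the convexity of $f$ and the fact that $f$ is $1$-Lipschitz, so that along $e$ one has $f(y)-f(x)=\int|\nabla f|\,ds$ and a segment is near-minimizing precisely when $|\nabla f|\to 1$, that is when $R\to 0$ on the relevant region. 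The time-$0$ slice has nonnegative Ricci curvature and contains a line, so the Cheeger--Gromoll splitting theorem gives $g_{\infty}(0)=ds\otimes ds+g_N(0)$; since the splitting direction has vanishing Ricci curvature it is preserved under the flow by Hamilton's strong maximum principle for the curvature operator, and the whole eternal flow splits as $\widetilde g(t)=ds\otimes ds+g_N(t)$. The cross-section $(N,g_N(t))$ then inherits nonnegative curvature operator and $\kappa$-noncollapsing from the product structure and has dimension $n-1$, as claimed.

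The main obstacle is this splitting step, and within it the two delicate points are (a) producing a two-sided minimizing line rather than a mere geodesic ray, and (b) transferring the static splitting of the $t=0$ slice to the full space-time flow. Point (a) hinges on the decay $R\to 0$ (equivalently $|\nabla f|\to 1$) along the end: on a fixed $g_{p_i}$-ball the curvature ratio bound gives $R\le C\,R(p_i)$, so it suffices that $R(p_i)\to 0$, and controlling this together with the convexity of $f$ is where the real work lies and where one must rule out a compact or bending limit. For (b) the cleanest route is Hamilton's strong maximum principle, which forces the kernel of $\mathrm{Ric}$ to be parallel and flow-invariant, yielding a local isometric product that globalizes under completeness and noncollapsing. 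I would treat the compactness estimate as standard given the structure theory of $\kappa$-solutions and concentrate the technical effort on the line construction and the decay of $R$.
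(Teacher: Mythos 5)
First, a point of reference: the paper does not actually prove Theorem \ref{theorem-soliton-real}; it imports it from Theorem 1.6 of \cite{DZ2}, and Remark \ref{soliton-lower-bound-estimate} records the key input of that proof, namely the lower bound (\ref{maximal-growth}), $R(x)\ge C/\rho(x)$. So your proposal has to stand as a self-contained argument, and the natural benchmark inside this paper is Lemma \ref{lem-for the theorem of pinching case-2}, where the authors carry out, under the extra pinching hypothesis, exactly the strategy you describe: $\kappa$-solution compactness, then showing that $X_{(i)}=R(p_i)^{-1/2}\nabla f$ becomes parallel and nonvanishing in the limit, then propagating the splitting in time. Your compactness step is fine and standard.

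The genuine gap is the splitting step. Your line construction (equivalently, the parallelism of the limit of $\nabla f$) works only if $R(p_i)\to 0$, and more precisely only if $R\to 0$ uniformly on the rescaled balls $B(p_i,d;R(p_i)g)$; you acknowledge this (``it suffices that $R(p_i)\to 0$\,... where the real work lies''), but you give no proof of this decay, and nothing in the hypotheses of Theorem \ref{theorem-soliton-real} hands it to you. In this paper, uniform decay is Corollary \ref{uniform curvature decay}, and its proof (via Lemmas \ref{pinching of scalar curvature} and \ref{lem-curvature decay}) leans essentially on the horizontal $\epsilon$-pinching hypothesis: pinching plus Myers' theorem bounds the diameter of the level sets $\Sigma_r$, which lets a curvature lower bound at one point spread over an entire level set and then contradict Deruelle's average decay estimate from \cite{D}. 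None of that machinery exists under merely nonnegative curvature operator and positive Ricci curvature, and your suggested substitutes (convexity of $f$, the $1$-Lipschitz bound) cannot produce it. Concretely, your proof says nothing about a sequence $p_i\to\infty$ with $R(p_i)\ge c>0$: on the rescaled balls $|\nabla f|$ is then only bounded below by $\sqrt{R_{\max}-\sup_{\Sigma_{f(o)+1}}R}$, the integral curves of $\nabla f$ are merely quasi-geodesics with quasi-constant bounded away from $1$, the estimate $|\nabla_{(g_i)}X_{(i)}|_{g_i}\le C(d)\sqrt{R(p_i)}$ no longer tends to zero, no line is produced, and Cheeger--Gromoll cannot be invoked. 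In that regime the limit is itself a noncompact $\kappa$-noncollapsed steady soliton with no equilibrium point (the potentials $f-f(p_i)$ converge), and splitting such a limit requires an argument of a different kind; this is precisely the work that the citation to \cite{DZ2} encapsulates and that your proposal replaces with a deferral. Relatedly, you never prove or use the estimate $R\ge C/\rho$ of Remark \ref{soliton-lower-bound-estimate}, which the paper singles out as what the cited proof ``is based on.'' As written, your argument establishes the weaker statement that the conclusion of Theorem \ref{theorem-soliton-real} holds under the additional hypothesis of uniform scalar curvature decay --- essentially the setting of Lemma \ref{lem-for the theorem of pinching case-2} and Corollary \ref{theorem-original pinching condition} --- but not Theorem \ref{theorem-soliton-real} itself.
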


\begin{rem}\label{soliton-lower-bound-estimate} The proof of  Theorem \ref{theorem-soliton-real} is based on an estimate
 \begin{align}\label{maximal-growth}
\frac{C}{\rho(x)}\leq R(x),~{\rm if}~\rho(x)\ge r_0>>1,
\end{align}
where $C>0$ is a uniform constant.
(\ref{maximal-growth}) is proved for   $\kappa$-noncollapsed  K\"ahler-Ricci solitons with nonnegative bisectional curvature in  Proposition 4.3 in \cite{DZ2}. For steady Ricci solitons  in   Theorem \ref{theorem-soliton-real} it  is still true
since we assume the existence of   equilibrium points.\footnote{It is proved that  there exists an equilibrium point on a steady  K\"ahler-Ricci soliton with nonnegative bisectional curvature and positive Ricci curvature  \cite{DZ1}.}   (\ref{maximal-growth}) will be frequently used   below (cf. (\ref{r-lower})).

\end{rem}

Under horizontally  $\epsilon$-pinched Ricci curvature,  we can  also control the local structure of steady Ricci solitons.

\begin{lem}\label{lem-for the theorem of pinching case-1}Let  $(M,g,f)$ be a $\kappa$-noncollapsed steady Ricci soliton  as in Theorem \ref{theorem-pinching case}.  Let  $p_{r}\in \Sigma_{r}$ such that
$R(p_{r})=\inf_{x\in \Sigma_{r}}R(x)$. Then, for any $k\in\mathbb{N}$, there exists $r(k,\epsilon)$ such that for any $r\geq r(k,\epsilon)$
\begin{align}\label{set-mr-contain}
B(p_{r},\frac{k}{\sqrt{R_{max}}};  g_{r})\subset M_{r,k}^{\prime}\subset B(p_{r},2\pi\sqrt{\frac{n-2}{\epsilon}}+\frac{2k}{\sqrt{C_{0}}};  {g_{r}}),
\end{align}
where $g_{r}=R(p_{r})g$, $R_{\max}=\max_{x\in M}R(x)$, $C_{0}=R_{\max}-\sup_{x\in \Sigma_{f(o)+1}}R(x)>0,$ and $M_{r,k}^{\prime}$  is a subset of $M$ defined by
$$M_{r,k}^{\prime}=\{x\in M| ~ r-\frac{k}{\sqrt{R(p_{r})}}\le f(x)\le r+\frac{k}{\sqrt{R(p_{r})}},R(p_{r})=\inf_{x\in\Sigma_r}R(x)\}.$$
\end{lem}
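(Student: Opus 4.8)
The plan is to build the two inclusions directly from the soliton identity $R+|\nabla f|^2=R_{\max}$ together with the Bonnet--Myers theorem applied to the level sets. Recall that on a steady soliton $R+|\nabla f|^2$ is constant; since $\nabla f(o)=0$ and $R$ attains its maximum $R_{\max}=R(o)$ at the equilibrium point, this constant equals $R_{\max}$, so $|\nabla f|^2=R_{\max}-R$ and in particular $|\nabla f|\le\sqrt{R_{\max}}$ everywhere. Moreover $\langle\nabla R,\nabla f\rangle=-2\,{\rm Ric}(\nabla f,\nabla f)\le 0$, so $R$ is nonincreasing along the flow of $\nabla f$; since every integral curve issuing from $o$ meets $\Sigma_{f(o)+1}$ exactly once, one gets $R(x)\le\sup_{\Sigma_{f(o)+1}}R$ and hence $|\nabla f(x)|^2\ge C_0$ for all $x$ with $f(x)\ge f(o)+1$. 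Using the lower bound \eqref{maximal-growth} and $f\simeq\rho$ from \eqref{cao}, one has $R(p_r)\ge c/r$, so $k/\sqrt{R(p_r)}=O(\sqrt r)$ grows strictly slower than $r$; thus I can choose $r(k,\epsilon)$ so large that for $r\ge r(k,\epsilon)$ the whole slab $M'_{r,k}$ lies in $\{f\ge f(o)+1\}$ and inside the pinching regime $r\ge r_0$, where each level set is a smooth sphere (Lemma~\ref{topology of level set}) and \eqref{pinching along level set} holds.

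For the left inclusion, note that $f$ is $\sqrt{R_{\max}}$-Lipschitz with respect to $g$ because $|\nabla f|\le\sqrt{R_{\max}}$. A point $x$ with $d_{g_r}(x,p_r)\le k/\sqrt{R_{\max}}$ satisfies $d_g(x,p_r)\le k/\sqrt{R_{\max}R(p_r)}$, whence $|f(x)-r|=|f(x)-f(p_r)|\le\sqrt{R_{\max}}\,d_g(x,p_r)\le k/\sqrt{R(p_r)}$. This is precisely the defining condition of $M'_{r,k}$, so $B(p_r,k/\sqrt{R_{\max}};g_r)\subset M'_{r,k}$.

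For the right inclusion, fix $x\in M'_{r,k}$ and route a path to $p_r$ in two legs. First flow along $\nabla f/|\nabla f|^2$ (which increases $f$ at unit speed) until reaching a point $y\in\Sigma_r$; since $|\nabla f|\ge\sqrt{C_0}$ throughout the slab, the $g$-length of this leg is at most $\int\frac{df}{|\nabla f|}\le|f(x)-r|/\sqrt{C_0}\le k/\sqrt{C_0 R(p_r)}$, i.e.\ at most $k/\sqrt{C_0}$ in the $g_r$-metric. Second, join $y$ to $p_r$ inside $\Sigma_r$: by \eqref{pinching along level set} and $R\ge R(p_r)$ on $\Sigma_r$, the induced curvature obeys $\overline{{\rm Ric}}\ge\epsilon R(p_r)\bar g=(n-2)\frac{\epsilon R(p_r)}{n-2}\bar g$, so Bonnet--Myers bounds the intrinsic diameter of $(\Sigma_r,\bar g)$ by $\pi\sqrt{(n-2)/(\epsilon R(p_r))}$, that is by $\pi\sqrt{(n-2)/\epsilon}$ after rescaling by $R(p_r)$; as the ambient distance is dominated by the intrinsic one, $d_{g_r}(y,p_r)\le\pi\sqrt{(n-2)/\epsilon}$. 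The triangle inequality then yields $d_{g_r}(x,p_r)\le\pi\sqrt{(n-2)/\epsilon}+k/\sqrt{C_0}$, comfortably inside the stated radius $2\pi\sqrt{(n-2)/\epsilon}+2k/\sqrt{C_0}$.

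The main obstacle is the normal-flow leg: one must verify that the integral curve of $\nabla f/|\nabla f|^2$ through $x$ is defined and actually reaches $\Sigma_r$ while remaining in $\{f\ge f(o)+1\}$, and that $|\nabla f|\ge\sqrt{C_0}$ holds all along it; this rests on the monotonicity of $R$ under the flow of $\nabla f$ together with the foliation of the end by the regular spheres $\Sigma_s$. Once these preliminaries are secured, the remaining inputs---Lipschitz control of $f$, Bonnet--Myers, and the choice of $r(k,\epsilon)$ via the sublinear growth of $k/\sqrt{R(p_r)}$ against $r$---are routine.
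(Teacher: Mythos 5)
Your proof is correct and follows essentially the same route as the paper's: Myers' theorem on the $\epsilon$-pinched level sets to bound ${\rm diam}(\Sigma_r,\bar g)$ by $O(1/\sqrt{\epsilon R(p_r)})$, the identity $R+|\nabla f|^2=R_{\max}$ together with monotonicity of $R$ along gradient flow lines to get $\sqrt{C_0}\le |\nabla f|\le \sqrt{R_{\max}}$ on the slab (after using $R(p_r)\ge C/r$ to place the slab far out), the Lipschitz bound $|f(x)-f(p_r)|\le \sqrt{R_{\max}}\,d_g(x,p_r)$ for the inner inclusion, and a two-leg path (gradient leg to $\Sigma_r$, then a path inside $\Sigma_r$) for the outer inclusion. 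The only deviations are cosmetic: you use the sharp Myers constant $\pi$ where the paper writes $2\pi$, parametrize the gradient leg by $\nabla f/|\nabla f|^2$ instead of $\phi_t$, and derive $R\le\sup_{\Sigma_{f(o)+1}}R$ directly from crossing the level set $\Sigma_{f(o)+1}$ rather than via the paper's distance-growth estimate.
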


\begin{proof}

Let $\overline{g}= g|_{\Sigma_{r}}$ be  an  induced metric of   hypersurface $\Sigma_r$  of $(M,g)$ and
$\overline{{\rm Ric}}(\bar g)$    Ricci  curvature of $\bar g$ with components $\overline R_{i'j'}$, where indices $i',j'$ are corresponding to  a basis of vector fields on $\Sigma_r$.
By (\ref{pinching along level set}), we have
\begin{align*}
\overline{R}_{i'j'}\geq \epsilon R\overline{g}_{i'j'}\geq \epsilon R(p_{r})\overline{g}_{i'j'}, ~\forall ~x\in \Sigma_{r},
\end{align*}
as long as  $r$ is  large enough.   By the Myer's theorem,  the diameter of $\Sigma_{r}$ is bounded by
\begin{align}\label{diameter-mr}
{\rm diam}(\Sigma_{r},g)\le {\rm diam}(\Sigma_{r},\overline{g}_{r})\leq 2\pi\sqrt{\frac{n-2}{\epsilon R(p_{r})}}.
\end{align}
It follows
\begin{align}\label{mr-set}
\Sigma_{r}\subset B(p_{r},A(\epsilon);  {g_{r}}),
\end{align}
where $A(\epsilon)=2\pi\sqrt{\frac{n-2}{\epsilon}}$.

Let $\phi_{t}$ be a one parameter subgroup generated by $-\nabla f$. Then  there exists a largely  negative  $t_0$ such that (cf. Lemma 4.1, \cite{DZ2}),
\begin{align}\label{length-growth} c_1 |t|\le r(\phi_t(p), o)\le c_2|t|,~\forall ~t\le t_0,
\end{align}
where the uniform constant $c_1$ and $c_2>0$ independent of $p\in \Sigma_{f(o)+1}.$
Moreover, by the identity,
\begin{align}\label{identity-r}
|\nabla f|^{2}+R=R_{\max},
\end{align}
where $R_{\max}=R(o), $ we have
\begin{align}\label{monoto-scalar}
\frac{d R(\phi_{t}(q))}{d t}=2{\rm Ric}(\nabla f,\nabla f)>0, ~\forall ~q\in M\setminus\{o\}.
\end{align}
(\ref{monoto-scalar})  means that scalar curvature is decreasing along the  integral curves $\phi_{t}(q)$.
Thus  by (\ref{length-growth}),  there exists   a large $r_0$ such that for any  $x$ with $\rho(x,o)\ge r_0$
\begin{align}\label{curvature-bounded}0\le R(x)\le \sup_{z\in \Sigma_{f(o)+1}}R(z)<R_{\max}.
\end{align}
On the other hand, by Remark \ref{soliton-lower-bound-estimate} together with (\ref{cao}),  there exists $C>0$ such that
\begin{align}\label{r-lower}
R(p_{r})\cdot r\geq C>0.
\end{align}
 Then  for a fixed $k$, we have
\begin{align}
\frac{r-\frac{k}{\sqrt{R(p_{r})}}}{r}\rightarrow1,~as~r\rightarrow\infty.\notag
\end{align}
Since
$$C_{1} \rho(x,o)\leq f(x)\leq C_{2} \rho(x,o),$$
we see
 $$\rho(x,o)\to \infty, ~{\rm as }~r\to \infty, ~\forall ~x\in  M_{r,k}^{\prime}.$$
Hence by  (\ref{curvature-bounded}), we get
$$
0\le R(x)\le \sup_{z\in \Sigma_{f(o)+1}}R(z)<R_{\max},~x\in M_{r,k}^{\prime}$$
as long as $r$ is large enough. This implies
\begin{align}\label{nabla-f}
\sqrt{C_{0}}\le|\nabla f|(x)\le \sqrt{R_{\max}},~~x\in M_{r,k}^{\prime},
\end{align}
where $C_{0}=R_{\max}-\sup_{x\in \Sigma_{f(o)+1}}R(x)>0$.

 For any $q\in M_{r,k}^{\prime}$, there exists  $q^{\prime}\in \Sigma_{r}$  such that $\phi_{s}(q)=q^{\prime}$ for some $s\in \mathbb{R}$.
Then by (\ref{mr-set}) and  (\ref{nabla-f}), we have
\begin{align*}
d(q,p_{r})\leq& d(q^{\prime},p_{r})  + d(q,q^{\prime})\\
\leq& {\rm diam}(\Sigma_{r},g)+\mathcal{L}(\phi_{\tau}|_{[0,s]})\\
\leq& 2\pi\sqrt{\frac{n-2}{\epsilon R(p_{r})}}+|\int_{0}^{s}|\frac{d\phi_{\tau}(q)}{d\tau}|d\tau|\\
=& 2\pi\sqrt{\frac{n-2}{\epsilon R(p_{r})}}+\int_{0}^{s}|\nabla f(\phi_{\tau}(q))|d\tau\\
\le& 2\pi\sqrt{\frac{n-2}{\epsilon R(p_{r})}}+\int_{0}^{s}|\nabla f(\phi_{\tau}(q))|^{2}\cdot \frac{1}{\sqrt{C_{0}}}d\tau\\
=& 2\pi\sqrt{\frac{n-2}{\epsilon R(p_{r})}}+|\int_{0}^{s}\frac{d(f(\phi_{\tau}(q)))}{d\tau}\cdot \frac{1}{\sqrt{C_{0}}}d\tau|\\
\leq& 2\pi\sqrt{\frac{n-2}{\epsilon R(p_{r})}}+|f(q)-f(p_{r})|\cdot \frac{1}{\sqrt{C_{0}}}\\
\leq& \Big(2\pi\sqrt{\frac{n-2}{\epsilon}}+\frac{2k}{\sqrt{C_{0}}}\Big)\cdot \frac{1}{\sqrt{R(p_{r})}}.
\end{align*}
Thus
\begin{align*}
M_{r,k}^{\prime}\subset B(p_{r},A(\epsilon)+\frac{2k}{\sqrt{C_{0}}} ; g_{r}).
\end{align*}
 This proves the second relation in (\ref{set-mr-contain}).

For any $q\in M$, let $\gamma(s)$ be any curve connecting $p_{r}$ and $q$ such that $\gamma(s_{1})=q$ and $\gamma(s_{2})=p_{r}$. Then,
\begin{align*}
\mathcal{L}(q,p_{r})=&\int_{s_{1}}^{s_{2}}\sqrt{\langle \gamma^{\prime}(s),\gamma^{\prime}(s)\rangle}ds\\
\geq& \int_{s_{1}}^{s_{2}}\frac{|\langle\gamma^{\prime}(s),\nabla f\rangle|}{|\nabla f|}ds\\
\geq& \frac{1}{\sqrt{R_{max}}}|\int_{s_{1}}^{s_{2}}\langle\gamma^{\prime}(s),\nabla f\rangle ds|\\
=&\frac{1}{\sqrt{R_{max}}}|f(p_{r})-f(q)|.
\end{align*}
It follows
\begin{align}
d(q,p_{r})\geq \frac{1}{\sqrt{R_{max}}}|f(p_{r})-f(q)|.\notag
\end{align}
In particular,  for  $q\in M\setminus M_{r,k}^{\prime}$, we get
\begin{align}
d(q,p_{r})\geq \frac{1}{\sqrt{R_{max}}}\cdot\frac{k}{\sqrt{R(p_{r})}}.\notag
\end{align}
Hence
\begin{align}\label{relation-1}
B_{g_{r}}(p_{r},\frac{k}{\sqrt{R_{max}}})\subset M_{r,k}^{\prime}.
\end{align}
The first relation  in (\ref{set-mr-contain}) is also true.
\end{proof}

\begin{lem}\label{pinching of scalar curvature}
Under the condition of Theorem \ref{theorem-pinching case}, there exists a constant $C(\epsilon)>0$ independent of $x$, $y$ and $r$ such that
\begin{align}
\sup_{x,y\in \Sigma_r}\frac{R(x)}{R(y)}\le C(\epsilon),~for~r>f(o).
\end{align}
\end{lem}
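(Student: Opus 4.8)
The plan is to combine the diameter control of the level sets coming from the horizontal pinching with a gradient estimate for the scalar curvature, and then to remove the obstruction present for small $\epsilon$ by a compactness argument based on Theorem \ref{theorem-soliton-real} and Theorem \ref{classification of flow}. First I would fix $r>f(o)$ large, write $a=R(p_r)=\inf_{\Sigma_r}R$ and $b=\sup_{\Sigma_r}R$, and recall from the Myers estimate already used in Lemma \ref{lem-for the theorem of pinching case-1} (see \eqref{diameter-mr}) that the pinching \eqref{pinching along level set} forces ${\rm diam}(\Sigma_r,\bar g)\le 2\pi\sqrt{(n-2)/(\epsilon a)}$. Since the self-similar flow generated by $(M,g,f)$ is an eternal, $\kappa$-noncollapsed solution with bounded nonnegative curvature operator (the bound $R\le R_{\max}$ comes from \eqref{identity-r}), it is a $\kappa$-solution, so Perelman's gradient estimate supplies a dimensional constant $\eta$ with $|\nabla R|\le \eta R^{3/2}$; equivalently $R^{-1/2}$ is $(\eta/2)$-Lipschitz. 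Integrating this along a minimizing geodesic in $(\Sigma_r,\bar g)$ joining $p_r$ to a maximum point of $R$ gives $a^{-1/2}-b^{-1/2}\le \tfrac{\eta}{2}\,{\rm diam}(\Sigma_r,\bar g)\le \eta\pi\sqrt{(n-2)/\epsilon}\,a^{-1/2}$.

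Setting $\beta=\eta\pi\sqrt{(n-2)/\epsilon}$, this already yields $b/a\le(1-\beta)^{-2}$ whenever $\beta<1$, that is, for $\epsilon$ large. The main obstacle is precisely the small-$\epsilon$ regime, where the level sets are long compared with $a^{-1/2}$ and the one-shot integration is too lossy. To treat all $\epsilon>0$ I would argue by contradiction and compactness. Suppose the ratio is unbounded along some $r_i\to\infty$, so $b_i/a_i\to\infty$. Rescaling by $R(p_{r_i})$ and basing at $p_{r_i}$, Theorem \ref{theorem-soliton-real} produces a subsequential limit $(\mathbb{R}\times N,\widetilde g(t))$, and by Lemma \ref{lem-for the theorem of pinching case-1} the rescaled $\Sigma_{r_i}$ stay in a fixed ball, so they converge to the cross-section $N$. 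Because the pinching condition \eqref{pinching along level set} is scale-invariant and the cross-section is totally geodesic in the product, the limit satisfies ${\rm Ric}_N\ge \epsilon R_N g_N$; thus $(N,g_N(t))$ is a simply connected $\epsilon$-pinched $\kappa$-solution of dimension $n-1$, and Theorem \ref{classification of flow} forces $R_N$ to be constant. Since the rescaled minimum equals $1$ at the basepoint, $R_N\equiv 1$, and smooth convergence on the fixed ball forces $b_i/a_i\to 1$, a contradiction.

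Finally, the range $f(o)<r\le r_0$ is handled directly: there $\Sigma_r$ forms a smoothly varying compact family on which $R$ is continuous and positive, and the ratio tends to $1$ as $r\to f(o)$ because $\Sigma_r$ shrinks to the equilibrium point, where $R=R_{\max}$; hence the ratio is bounded on this compact range. Combining the two regimes yields a single constant $C(\epsilon)$.

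The step I expect to be the most delicate is the transfer of the pinching to the limiting cross-section. One must verify that the rescaled level sets $\Sigma_{r_i}$ genuinely converge to a slice $\{s\}\times N$ of the cylinder (equivalently, that $\nabla f/|\nabla f|$ limits to the axis direction) and that the induced Ricci curvatures $\overline{{\rm Ric}}(\bar g)$ converge to the intrinsic Ricci curvature of $N$, so that the hypotheses of Theorem \ref{classification of flow} are met and the conclusion $R_N\equiv{\rm const}$ is legitimate.
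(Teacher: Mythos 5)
Your overall mechanism (Myers diameter bound on $\Sigma_r$ at the scale $R(p_r)^{-1/2}$, then curvature control at bounded distance in the rescaled metric) is the same as the paper's, but the paper's execution is far shorter and needs neither of your auxiliary devices. The paper considers the rescaled $\kappa$-solutions $(M,R(p_r)g(R(p_r)^{-1}t),p_r)$ and quotes from the proof of Theorem 3.3 in \cite{DZ2} the quantitative local estimate $R_{R(p_r)g}(x)\le C(d)$ on $B(p_r,d;R(p_r)g)$ for every $d>0$ (bounded curvature at bounded distance), combines it with the containment $\Sigma_r\subset B\bigl(p_r,2\pi\sqrt{(n-2)/\epsilon};R(p_r)g\bigr)$ from \eqref{mr-set}, and handles $f(o)<r\le r(\epsilon)$ by compactness --- no limit, no contradiction, and no case split in $\epsilon$. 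Your Perelman gradient-estimate computation is correct but, as you note, only yields the bound when $\eta\pi\sqrt{(n-2)/\epsilon}<1$, so it is subsumed by your second regime and could simply be deleted.

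The real caveat is in your small-$\epsilon$ contradiction argument: the contradiction neither requires nor should invoke the transfer of the pinching to the cross-section $N$ or Theorem \ref{classification of flow}. If $b_i/a_i\to\infty$, then since the maximum points lie in $B(p_{r_i},A(\epsilon);g_{r_i})$ by Myers, the smooth Cheeger--Gromov convergence from Theorem \ref{theorem-soliton-real} already gives uniform bounds on $R_{g_{r_i}}$ over fixed balls, and you are done; you do not need $R_N\equiv 1$, only finiteness of the limit curvature on a fixed ball. More importantly, the step you flag as delicate --- that $\nabla f/|\nabla f|$ limits to the axis direction and that the rescaled $\Sigma_{r_i}$ converge to slices with induced Ricci converging to that of $N$ --- is exactly what the paper proves only later, in Lemma \ref{lem-for the theorem of pinching case-2}, and its proof there uses Corollary \ref{uniform curvature decay} (through the vanishing $|\nabla_{(g_{r_i})}X_{(i)}|_{g_{r_i}}\le C(d_0)\sqrt{R(p_{r_i})}$ in \eqref{x-parallel}), which rests on Lemma \ref{lem-curvature decay}, which in turn uses the present lemma. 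So the classification detour, as written, is circular within the paper's logical order; fortunately it is also superfluous, and once you drop it and keep only ``convergence implies bounded curvature at bounded distance,'' your argument collapses to essentially the paper's proof.
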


\begin{proof}
Let $p_{r}$ be chosen as  same as in Lemma \ref{lem-for the theorem of pinching case-1} for $r>f(o)$. We consider  rescaled  $\kappa$-solutions  $(M,R(p_{r})g(R(p_{r})^{-1}t),p_{r})$.  As in the proof of Theorem 3.3 in \cite{DZ2}, we see that for any $d>0$  there exists a constant $C(d)>0$ such that
\begin{align}\label{bounded-curvature}
R_{R(p_{r})g}(x)\le C(d), ~\forall~x\in ~B(p_{r},d;  R(p_{r})g).
\end{align}
On the other hand,  by (\ref{mr-set}),  there exists  a constant $r(\epsilon)$ such that for any $r\ge r(\epsilon)$ it holds
\begin{align}\label{sigma-set} \Sigma_{r}\subset B(p_{r},2\pi\sqrt{\frac{n-2}{\epsilon}}  ;  R(p_{r})g).
\end{align}
Then  there exists a  $C(\epsilon)$ such that
$$\frac{R(x)}{R(y)}\le \frac{R(x)}{R(p_{r})}\leq C(\epsilon),~\forall ~x,y\in \Sigma_{r}, ~\forall~r\ge r(\epsilon).$$
Since $\{x\in M|~  f(x)\le r(\epsilon)\}$ is compact, the lemma is true.
\end{proof}

\begin{lem}\label{lem-curvature decay}
Under the condition of Theorem \ref{theorem-pinching case}, for any $x\in M\setminus\{o\}$,
 $$R(\phi_{t}(x))\rightarrow0, ~as~t\rightarrow-\infty.$$
\end{lem}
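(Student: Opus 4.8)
The plan is to combine the monotonicity of scalar curvature along the integral curves of $-\nabla f$ with an integration (divergence-theorem) argument on the level sets. First I would note that by (\ref{monoto-scalar}) the function $t\mapsto R(\phi_t(x))$ is strictly increasing for $x\neq o$, and since the nonnegative curvature operator forces $R\ge 0$, as $t\to-\infty$ this function decreases and stays above $0$. Hence the limit $L:=\lim_{t\to-\infty}R(\phi_t(x))\ge 0$ exists, and it suffices to rule out $L>0$. I would argue by contradiction, assuming $L>0$.

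Assume $L>0$. Because $\frac{d}{dt}f(\phi_t(x))=\langle\nabla f,-\nabla f\rangle=-|\nabla f|^2<0$ and $f(\phi_t(x))\to\infty$ as $t\to-\infty$ (by (\ref{cao}) and (\ref{length-growth})), the values $r(t)=f(\phi_t(x))$ depend continuously and monotonically on $t$ and therefore sweep out an entire interval $[r_1,\infty)$, with $r_1=f(\phi_{t_0}(x))$. For each such $r=r(t)$, since $\phi_t(x)\in\Sigma_r$, Lemma \ref{pinching of scalar curvature} gives $R(p_r)=\inf_{\Sigma_r}R\ge R(\phi_t(x))/C(\epsilon)\ge L/C(\epsilon)>0$. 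Thus $R\ge L/C(\epsilon)$ on the whole end $\{f\ge r_1\}$. Moreover, the pinching (\ref{pinching along level set}) now reads $\overline{{\rm Ric}}\ge \epsilon R(p_r)\overline g\ge \tfrac{\epsilon L}{C(\epsilon)}\overline g$ on each $\Sigma_r$ with $r\ge r_1$, so by Myers' theorem (as in (\ref{diameter-mr})) the level sets have a uniform diameter bound, and by Bishop--Gromov volume comparison they have a uniform area bound ${\rm Area}(\Sigma_r)\le V_0$.

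Finally I would integrate the soliton equation. Tracing $R_{ij}=\nabla_i\nabla_j f$ yields $\Delta f=R$, so the divergence theorem on the compact sublevel set $\Omega_r=\{f\le r\}$, whose boundary is $\Sigma_r$ with outward unit normal $\nabla f/|\nabla f|$, gives $\int_{\Omega_r}R\,dV=\int_{\Sigma_r}|\nabla f|\,dA\le \sqrt{R_{\max}}\,{\rm Area}(\Sigma_r)\le \sqrt{R_{\max}}\,V_0$, using $|\nabla f|\le\sqrt{R_{\max}}$ from (\ref{identity-r}). Letting $r\to\infty$ and using $R\ge0$ gives $\int_M R\,dV<\infty$. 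On the other hand, $M$ is noncompact with ${\rm Ric}\ge 0$, hence has infinite volume, while $\Omega_{r_1}$ is compact; so $\{f\ge r_1\}$ has infinite volume, and $R\ge L/C(\epsilon)>0$ there forces $\int_M R\,dV=\infty$. This contradiction shows $L=0$, which is the assertion.

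The hard part will be the uniform area bound for the level sets: the argument needs the lower bound $R(p_r)\ge L/C(\epsilon)$ to hold for \emph{every} $r\ge r_1$ rather than merely along a sequence $t_j\to-\infty$, and this is exactly what the monotonicity of $r(t)=f(\phi_t(x))$ supplies. Once that uniform positivity is secured, Myers and Bishop--Gromov convert the level-set pinching into the area control that makes the two incompatible estimates on $\int_M R\,dV$ close the contradiction.
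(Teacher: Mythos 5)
Your proof is correct, and its first half is the same as the paper's: the contradiction hypothesis, the monotonicity (\ref{monoto-scalar}), and Lemma \ref{pinching of scalar curvature} give the uniform lower bound $R\ge L/C(\epsilon)>0$ on the end $\{f\ge r_1\}$ (the paper extends this bound to all of $M$ by compactness of the remaining piece). Where you genuinely differ is in how the contradiction is closed. The paper simply quotes Deruelle's estimate (Lemma 4.3 of \cite{D}), $\frac{1}{{\rm Vol}(B(o,r))}\int_{B(o,r)}R\,dv\le C/r$, which is immediately incompatible with a uniform positive lower bound on $R$. You instead give a self-contained integration argument: tracing the soliton equation to get $\Delta f=R$, applying the divergence theorem on the compact sublevel sets $\{f\le r\}$ with $|\nabla f|\le\sqrt{R_{\max}}$ from (\ref{identity-r}), and bounding ${\rm Area}(\Sigma_r,\overline g)$ uniformly via Myers/Bishop --- which uses the pinching (\ref{pinching along level set}) a second time, together with your lower bound on $R$ --- so that $\int_M R\,dV<\infty$; this contradicts $R\ge L/C(\epsilon)$ on a set of infinite volume, infinite volume being Yau's theorem for complete noncompact manifolds with ${\rm Ric}\ge 0$. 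Both routes are sound: the paper's is shorter and its second step needs no pinching at all (Deruelle's lemma holds for any steady soliton with nonnegative Ricci curvature), while yours is essentially a level-set rendition of the proof behind Deruelle's estimate and is self-contained modulo standard comparison theorems (Bishop's volume bound and Yau's infinite-volume theorem). Two small points to tidy up: choose $r_1\ge r_0$ so that (\ref{pinching along level set}) is actually in force on every level set you use, and note that $f(\phi_t(x))\to\infty$ as $t\to-\infty$ follows directly from $\frac{d}{dt}f(\phi_t(x))=-|\nabla f|^2\le -(R_{\max}-R(x))<0$, by (\ref{identity-r}) and (\ref{monoto-scalar}), which is cleaner than invoking (\ref{length-growth}), stated in the paper only for points on $\Sigma_{f(o)+1}$.
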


\begin{proof}
If the lemma is not true, by (\ref{monoto-scalar}),  there exists  $x_0$ such that
\begin{align}
\lim_{t\rightarrow-\infty}R(x_{0},t) =\lim_{t\rightarrow-\infty}R(\phi_{t}(x_{0}))=C_0>0.\notag
\end{align}
Let $r_t=f(\phi_{t}(x_{0}))$ and $p_{r_t}$ be chosen  as in Lemma \ref{lem-for the theorem of pinching case-1}. By Lemma \ref{pinching of scalar curvature}, $R(p_{r_t})\ge C_0/C(\epsilon)$.
On the other hand, for any $x\in \{y\in M| ~ f(y)\ge f(\phi_{-1}(x_{0}))\}$, there exists $t_x\le -1$ such that $f(x)=f(\phi_{t_x}(x_{0}))$. Thus
\begin{align}
R(x)\ge R(p_{r_{t_x}})\ge\frac{C_0}{C(\epsilon)}.\notag
\end{align}
This implies that there exists a uniform constant $C_0'$ such that
\begin{align}\label{positive-bound}
R(x)\ge C_0',~\forall~x\in M.
\end{align}
 However, by Lemma 4.3 in \cite{D}, we know
\begin{align}
\frac{1}{{\rm Vol}(B(o,r))}\int_{B(o,r)}R(x)dv\le \frac{C}{r}, ~\forall ~r>0,\notag
\end{align}
for some uniform  $C$  independent of $r$. This is   a contradiction of  (\ref{positive-bound}).  The lemma is proved.
\end{proof}

As a corollary of  Lemma \ref{pinching of scalar curvature} and Lemma \ref{lem-curvature decay}, we prove

\begin{cor}\label{uniform curvature decay} Let   $(M,g,f)$ be a  $\kappa$-noncollapsed steady Ricci soliton  with nonnegative curvature operator.   If   $g$  has  horizontally   $\epsilon$-pinched  Ricci curvature,
 scalar curvature  $R(x)$ of $g$  decays uniformly. Namely,  $R(x)$ satisfies (\ref{r-decay-uniform}).
\end{cor}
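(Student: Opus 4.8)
The plan is to deduce the uniform decay (\ref{r-decay-uniform}) by feeding the pointwise decay along a single integral curve (Lemma \ref{lem-curvature decay}) into the uniform pinching of scalar curvature on each level set (Lemma \ref{pinching of scalar curvature}), and then transferring growth in $f$ into growth in $\rho$ via (\ref{cao}). The point is that we never need a version of Lemma \ref{lem-curvature decay} that is uniform over initial points: one well-chosen trajectory already sweeps out every level set, and the level-set pinching propagates its smallness to the whole manifold.

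First I would fix a point $x_{0}\in M\setminus\{o\}$ and record that, since $\phi_{t}$ is generated by $-\nabla f$, one has $\frac{d}{dt}f(\phi_{t}(x_{0}))=-|\nabla f|^{2}(\phi_{t}(x_{0}))<0$ away from the (unique) equilibrium point, because $|\nabla f|^{2}=R_{\max}-R>0$ there by (\ref{identity-r}). Hence $t\mapsto f(\phi_{t}(x_{0}))$ is a continuous strictly decreasing bijection of $\mathbb{R}$ onto $(f(o),\infty)$, with $f(\phi_{t}(x_{0}))\to\infty$ as $t\to-\infty$. In particular, for every $r>f(o)$ there is a unique $t(r)$ with $\phi_{t(r)}(x_{0})\in\Sigma_{r}$, and $t(r)\to-\infty$ as $r\to\infty$. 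Applying Lemma \ref{lem-curvature decay} along this single curve gives $R(\phi_{t(r)}(x_{0}))\to 0$ as $r\to\infty$.

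Next, with $p_{r}\in\Sigma_{r}$ the minimum point of $R$ on $\Sigma_{r}$ as in Lemma \ref{lem-for the theorem of pinching case-1}, I would use $R(p_{r})=\inf_{\Sigma_{r}}R\le R(\phi_{t(r)}(x_{0}))$ to conclude $R(p_{r})\to 0$. Lemma \ref{pinching of scalar curvature} then upgrades the control of the infimum to control of the supremum on the entire level set:
\[
\sup_{x\in\Sigma_{r}}R(x)\le C(\epsilon)\,R(p_{r})\longrightarrow 0\quad\text{as } r\to\infty.
\]
Finally, to pass from level-set control to the distance function, given any $x$ with $\rho(x)$ large I would note $x\in\Sigma_{f(x)}$ and, by the lower bound in (\ref{cao}), $f(x)\ge c_{1}\rho(x)\to\infty$; hence $R(x)\le\sup_{\Sigma_{f(x)}}R\to 0$, which is exactly (\ref{r-decay-uniform}).

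I do not expect a serious obstacle here, since the corollary is essentially an assembly of the two preceding lemmas; the only step requiring a little care is the monotonicity/surjectivity argument of the middle paragraph, which guarantees that a single trajectory $\phi_{t}(x_{0})$ realizes every value of $r$ and that $R(p_{r})$ therefore inherits the decay of $R$ along that trajectory. Everything else is a direct application of the Harnack-type pinching estimate of Lemma \ref{pinching of scalar curvature} together with (\ref{cao}).
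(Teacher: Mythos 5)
Your proposal is correct and takes essentially the same route as the paper's proof: both bound $R$ on $\Sigma_r$ by $C(\epsilon)R(p_r)$ via Lemma \ref{pinching of scalar curvature}, bound $R(p_r)$ by the value of $R$ at the point where a single fixed trajectory $\phi_t(x_0)$ meets $\Sigma_r$ (using minimality of $p_r$ on the level set and Lemma \ref{lem-curvature decay}), and convert growth of $f$ into growth of $\rho$ via (\ref{cao}). The only difference is cosmetic: you make explicit the monotonicity and surjectivity of $t\mapsto f(\phi_t(x_0))$, which the paper uses implicitly when it asserts the existence of $t_i$ with $f(\phi_{t_i}(x_0))=r_i$.
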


\begin{proof}
 We suffice to prove that $R(y_i)\to 0$ for  any sequence of $\{y_i\}$ with $f(y_i)\rightarrow\infty$.  Let $r_i=f(y_i)$ and $p_{r_i}$ be defined
  as in Lemma \ref{lem-for the theorem of pinching case-1}. By Lemma \ref{pinching of scalar curvature},
\begin{align}
\frac{R(y_i)}{R(p_{r_{i}})}\leq C(\epsilon).
\end{align}
On the other hand, for a fixed $x_0$, there is a $t_i$ such that $f(\phi_{t_i}(x_{0}))=r_i= f(p_{r_i})$. Thus by
 Lemma \ref{lem-curvature decay}, we have
\begin{align}
R(p_{r_{i}})\le R(\phi_{t_i}(x_{0}))\to 0, ~as~r_i\rightarrow\infty.\notag
\end{align}
Hence
$R(y_{i})\rightarrow 0$ as $i\rightarrow\infty.$

\end{proof}

\begin{rem}\label{remark} By Corollary \ref{uniform curvature decay},  the constant $C_0$ at  relation (\ref{set-mr-contain}) in Lemma \ref{lem-for the theorem of pinching case-1} can be chosen  by $R_{max}$ (cf. Lemma \ref {nabla-f} below).
\end{rem}

Combining Theorem \ref{theorem-soliton-real}, Lemma \ref{lem-for the theorem of pinching case-1}
and Corollary \ref{uniform curvature decay}, we prove

\begin{lem}\label{lem-for the theorem of pinching case-2}Let  $(M,g,f)$ be a $\kappa$-noncollapsed steady Ricci soliton  as in Theorem \ref{theorem-pinching case} and   $p_{r}\in \Sigma_{r}$  chosen as in Lemma  \ref{lem-for the theorem of pinching case-1}.   Then
for any sequence  of $r\rightarrow\infty$, there exists a subsequence $r_{i}\rightarrow\infty$ such that
\begin{align*}
(M,g_{r_{i}}(t),p_{r_{i}})\rightarrow(\mathbb{R}\times\mathbb{S}^{n-1},\widetilde{g}(t),p_{\infty}),~for~t\in(-\infty,0],
\end{align*}
where $g_{r_{i}}(t)=R(p_{r_{i}})g(R^{-1}(p_{r_{i}})t)$ and $(\mathbb{R}\times\mathbb{S}^{n-1},\widetilde{g}(t))$ is a shrinking cylinders flow, namely,
$$\widetilde{g}(t)=ds\otimes ds+(n-2)[(n-1)-2t]g_{\mathbb{S}^{n-1}}.$$
\end{lem}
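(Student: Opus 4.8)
The plan is to feed the base points $p_{r_i}$ into Theorem \ref{theorem-soliton-real} and then identify the resulting cross-section with a round sphere, using the horizontal pinching together with the classification in Theorem \ref{classification of flow}. First, since $p_{r_i}\in\Sigma_{r_i}$ we have $f(p_{r_i})=r_i\to\infty$, and by (\ref{cao}) this forces $\rho(p_{r_i},o)\to\infty$, so $p_{r_i}\to\infty$ and Theorem \ref{theorem-soliton-real} applies: after passing to a subsequence, $(M,g_{r_i}(t),p_{r_i})$ converges in the Cheeger--Gromov sense on $t\in(-\infty,0]$ to a Ricci flow $(\mathbb{R}\times N,\widetilde g(t))$ with $\widetilde g(t)=ds\otimes ds+g_N(t)$, where $(N,g_N(t))$ is an $(n-1)$-dimensional $\kappa$-noncollapsed flow with nonnegative curvature operator. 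The $\mathbb{R}$-factor is the $\nabla f$-direction, so the level sets $\Sigma_{r_i}$, being orthogonal to $\nabla f$, converge to the totally geodesic slices $\{s\}\times N$.

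Second, I would show that $N$ is a compact $\kappa$-solution diffeomorphic to $\mathbb{S}^{n-1}$. The uniform diameter bound (\ref{mr-set}) of Lemma \ref{lem-for the theorem of pinching case-1}, namely $\Sigma_{r_i}\subset B(p_{r_i},A(\epsilon);g_{r_i})$ with $A(\epsilon)=2\pi\sqrt{(n-2)/\epsilon}$ independent of $i$, passes to the limit to give $\mathrm{diam}(N,g_N(0))\le A(\epsilon)$; hence $N$ is compact, its curvature is bounded, and $(\mathbb{R}\times N,\widetilde g)$ is a genuine $\kappa$-solution, so that $(N,g_N(t))$ is an $(n-1)$-dimensional $\kappa$-solution. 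Since each $\Sigma_{r_i}\cong\mathbb{S}^{n-1}$ by Lemma \ref{topology of level set} and the convergence of the slices is smooth, $N$ is diffeomorphic to $\mathbb{S}^{n-1}$ and, in particular, simply connected (here $n\ge 3$).

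Third --- and this is the crux --- I would transfer the horizontal pinching to $N$. On $\Sigma_{r_i}$ one has $\overline{\mathrm{Ric}}\ge\epsilon R\,\overline g$, where $\overline{\mathrm{Ric}}$ is the intrinsic Ricci curvature of the slice and $R$ is the ambient scalar curvature. In the limit the intrinsic Ricci of $\Sigma_{r_i}$ converges to the intrinsic Ricci of $N$, while the ambient scalar curvature converges to the scalar curvature of the product $\mathbb{R}\times N$, which equals $R_N$ because the $\mathbb{R}$-factor is flat. Thus $\mathrm{Ric}_N\ge\epsilon R_N\,g_N$, i.e. $(N,g_N(t))$ is an $\epsilon$-Ricci-pinched, simply connected $\kappa$-solution. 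Theorem \ref{classification of flow} then presents $N$ as a product of round spheres, complex projective spaces with constant bisectional curvature, and Einstein irreducible symmetric spaces; but the only such product diffeomorphic to $\mathbb{S}^{n-1}$ is a single round sphere, so $(N,g_N(t))$ is a shrinking round $\mathbb{S}^{n-1}$.

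Finally, the normalization pins down the metric: because $g_{r_i}(t)=R(p_{r_i})g(R^{-1}(p_{r_i})t)$ satisfies $R_{g_{r_i}(0)}(p_{r_i})=1$ and a round sphere has constant scalar curvature, $R_N(0)\equiv 1$, forcing $g_N(0)=(n-2)(n-1)g_{\mathbb{S}^{n-1}}$; solving the round Ricci flow $\partial_t g_N=-2\,\mathrm{Ric}(g_N)$ gives $g_N(t)=(n-2)[(n-1)-2t]g_{\mathbb{S}^{n-1}}$, which is exactly $\widetilde g(t)$. I expect the main obstacle to lie in the third step: making rigorous that the slices $\Sigma_{r_i}$ converge to $N$ as submanifolds, so that their induced Ricci curvatures pass to the intrinsic Ricci of $N$ and the ambient scalar curvature limit decouples as $R_N$. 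Controlling the second fundamental form of $\Sigma_{r_i}$ in the rescaled metric (so that the slices become totally geodesic in the limit) is the delicate point, and here the monotonicity (\ref{monoto-scalar}) together with the $|\nabla f|$-estimates in (\ref{nabla-f}) are the natural tools.
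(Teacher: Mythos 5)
Your plan has the same skeleton as the paper's proof: apply Theorem \ref{theorem-soliton-real} at the points $p_{r_i}$, identify $N$ with $\mathbb{S}^{n-1}$, transfer the horizontal pinching to $N$, and conclude via Theorem \ref{classification of flow} together with the normalization $R_{g_{r_i}(0)}(p_{r_i})=1$. However, two steps in your execution have genuine gaps.

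First, the step you yourself flag as delicate --- that the metric $\mathbb{R}$-factor is the $\nabla f$-direction, equivalently that the rescaled level sets become totally geodesic in the limit --- does not follow from Theorem \ref{theorem-soliton-real} (which only provides \emph{some} splitting), nor from the tools you propose, (\ref{monoto-scalar}) and (\ref{nabla-f}). The second fundamental form of $\Sigma_r$ is $B(X,Y)=-\mathrm{Ric}(X,Y)\,\nabla f/|\nabla f|^{2}$, so its norm with respect to $g_{r_i}=R(p_{r_i})g$ is of order $|\mathrm{Ric}|_{g}/\bigl(|\nabla f|\sqrt{R(p_{r_i})}\bigr)\le C(d_0)\sqrt{R(p_{r_i})}/\sqrt{C_0}$ on $B(p_{r_i},d_0;g_{r_i})$, by the local curvature bound (\ref{bounded-curvature}). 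This tends to zero \emph{only because} $R(p_{r_i})\to 0$, i.e.\ the uniform scalar curvature decay of Corollary \ref{uniform curvature decay} (which rests on Lemmas \ref{pinching of scalar curvature} and \ref{lem-curvature decay}) is indispensable here; a positive lower bound on $|\nabla f|$ and monotonicity alone give nothing. This is exactly the paper's computation (\ref{x-parallel}), showing $X_{(i)}=R(p_{r_i})^{-1/2}\nabla f$ becomes parallel in the limit, combined with (\ref{x-maximal}) (again via curvature decay and (\ref{identity-r})) to see that $X_{(\infty)}$ is nonzero and tangent to the $\mathbb{R}$-factor of the explicit diffeomorphisms $\Phi_{r_i}$ built from the flow of $-\nabla f$ and Lemma \ref{topology of level set}.

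Second, your transfer of the pinching only yields $\mathrm{Ric}_N\ge\epsilon R_N\,g_N$ at $t=0$, because the slices $\Sigma_{r_i}$ you compare with sit inside $(M,g_{r_i}(0))$. But Theorem \ref{classification of flow} concerns flows that are $\epsilon$-pinched for \emph{all} $t$ (this is how a pinched flow is defined in Section 2, and the proof of that theorem takes backward rescaled limits $\tau_i\to\infty$, so pinching at a single time cannot suffice). The paper closes this by upgrading the hypersurface pinching, via the Gauss equation, to the ambient inequality (\ref{pinching on the whole manifold}) valid on all of $M$; being scale- and diffeomorphism-invariant, that inequality holds along every rescaled flow $g_{r}(t)$ and therefore passes to the limit at every $t\le 0$. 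Your argument needs an analogous step --- for instance repeating the slice comparison at each fixed $t$, using that $g(t)=\phi_t^{\ast}g$ is isometric to $g$ and carries level sets to level sets --- before Theorem \ref{classification of flow} can legitimately be invoked.
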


\begin{proof}
By Theorem \ref{theorem-soliton-real}, for any sequence  of $r\rightarrow\infty$, there exists a subsequence of  $r_{i}\rightarrow\infty$ such that
\begin{align*}
(M,g_{r_{i}}(t),p_{r_{i}})\rightarrow(M_{\infty},\widetilde g(t),p_{\infty}),~for~t\in(-\infty,0],
\end{align*}
where  $(M_{\infty},  \widetilde g(t))=(\mathbb{R} \times N,  ,ds^2+g_{N}(t))$ and $g_{N}(t)$ satisfies the Ricci flow equation for $t\in (-\infty,0]$. In the following,
 we  first  need to show that $N$ is diffeomorphic to   $\mathbb{S}^{n-1}$.  Fix a point $x_0\in M\setminus\{o\}$ and let $x_\tau=\phi_\tau(x_0)$  such that $f(x_{\tau_i})=r_i$.  We are going to construct diffeomorphism $\Phi_{r_{i}}:M_{r_i,k}^{\prime}\rightarrow \Phi_{r_{i}}(M_{r_i,k}^{\prime})( \subset  \mathbb{R} \times \mathbb{S}^{n-1} )$ and show that $(\Phi_{r_{i}}(M_{r_i,k}^{\prime}),(\Phi^{-1}_{r_{i}})^{\ast}g_{r_i}(0), \Phi_{r_{i}}(p_{r_i}))$ subsequently converge to $(  \mathbb{R} \times \mathbb{S}^{n-1}, g_{\infty}(0),p_{\infty})$ in $C_{loc}^{\infty}$ sense, where $M_{r,k}^{\prime}$  is a subset of $M$ defined as in Lemma \ref{lem-for the theorem of pinching case-1}.

By Lemma \ref{topology of level set}, we know that  the level set $\Sigma_{f(o)+1}$ is diffeomorphic to  $\mathbb{S}^{n-1}$. For any $x\in M$,  there exists  a unique $t_x$ and $\overline{x}\in \Sigma_{f(o)+1}$ such that $x=\phi_{t_x}(\overline{x})$.  We define $\Phi_{r_{i}}(x)=((f(x)-f(x_{\tau_i}))\sqrt{R(p_{r_i})},   \overline{x})\in \mathbb{R}\times \Sigma_{f(o)+1} \doteq   \mathbb{R} \times \mathbb{S}^{n-1}$.
  Since $|\nabla f|>0$ on $M\setminus\{o\}$, $\Phi_{r_{i}}$ is essentially a diffeomorphism from   $M_{r_i,k}^{\prime}$   to $\mathbb{R}\times\mathbb{S}^{n-1}\mathbb{R}$. Note that $\Phi_{r_{i}}( M_{r_i,k}')$ exhausts  $  \mathbb{R} \times \mathbb{S}^{n-1}$ as $k\rightarrow\infty$ by (\ref{diameter-mr}).
 On the other hand,   by  Lemma \ref{lem-for the theorem of pinching case-1},   there exist constants $C_1$ and $C_2$ depends only on $\epsilon$ and  $n$ such that
\begin{align}\label{control of set addition}
B(p_{r_i},C_1^{-1} k;  g_{r_i}(0))\subset  M_{r_i,k}^{\prime}\subset B(p_{r_i},C_2 k;  g_{r_i}(0)).
\end{align}
 Thus $\Phi_{\tau_{i}}(B(p_{r_i}, k;  g_{\tau_i}(0)))$ also exhausts $  \mathbb{R} \times \mathbb{S}^{n-1}$.
As a consequence,   \newline  $(\Phi_{r_{i}}(M_{r_i,k}^{\prime}),(\Phi^{-1}_{r_{i}})^{\ast}g_{r_i}(0),    \Phi_{r_{i}}(p_{r_i}))$ subsequently converges to $(  \mathbb{R} \times \mathbb{S}^{n-1}, \widetilde g(0),$ \newline $p_{\infty})$ in $C_{loc}^{\infty}$ sense.
Hence $  \mathbb{R} \times \mathbb{S}^{n-1}$ is diffeomorphic to $M_\infty$.
Therefore, to prove  that $N$ is diffeomorphic to $\mathbb{S}^{n-1}$,  it reduces  to show that the limit flow  $\widetilde g(t)$ splits along the direction  of $\mathbb{R}$ in $\mathbb{R}\times\mathbb{S}^{n-1}$.

Fix any $d_{0}>0$.  Let $g_{r_{i}}=g_{r_{i}}(0)$ and  $X_{(i)}=R(p_{r_i})^{-\frac{1}{2}}\nabla f$.
 Then as in (\ref{bounded-curvature}),  it holds
$$| {\rm Ric}_{g_{r_i}}|_{g_{r_i}}(x)\le C(d_0),~\forall~x\in B(p_{r_i},d_{0} ;  {g_{r_i}}).$$
Namely,
$$|{\rm  Ric}|(x)\le C(d_0)R(p_{r_i}),~\forall~x\in B(p_{r_i},d_{0} ;  {g_{r_i}}).$$
By  Corollary \ref{uniform curvature decay},  it follows
\begin{align}\label{x-parallel}
\sup_{ B(p_{r_i},d_{0} ;  {g_{r_i}})}| \nabla_{(g_{r_i})}X_{(i)}|_{g_{r_i}}&= \sup_{ B(p_{r_i},d_{0} ;  {g_{r_i}})}\frac{|{\rm Ric}|}{\sqrt{R(p_{r_i})}}\notag\\
&\le C(d_0)\sqrt{R(p_{r_i})} \to 0.
\end{align}
On the other hand,  by Shi's higher order estimate, we have
$$\sup_{ B(p_{r_i},d_{0} ;  {g_{r_i}})}| \nabla^{m}_{(g_{r_i})}X_{(i)}|_{g_{r_i}}\leq C(n)\sup_{ B(p_{r_i},d_{0} ;  {g_{r_i}})}| \nabla^{m-1}_{(g_{r_i})}{\rm Ric}({g_{r_i})}|_{g_{r_i}}\le C_3.$$
Thus
 \begin{align}
|\nabla_{(\widetilde g(0))}X_{(\infty)}|_{ \widetilde g(0)} =\lim_{k\rightarrow\infty}|\nabla_{(g_{r_i})}X_{(i)}|_{g_{r_i}}=0,\notag
\end{align}
where the convergence is uniform on $B(p_{r_i},d_{0} ;  {g_{r_i}})$.
This means that $X_{(\infty)}$ is parallel. It remains to show that  $X_{(\infty)}$ is tangent to $\mathbb{R}$ in $\mathbb{R}\times\mathbb{S}^{n-1}$

Let $\frac{\partial}{\partial w}$ be the  vector field   tangent  to $\mathbb{R}$ in $\mathbb{R}\times\mathbb{S}^{n-1}$. By the construction of $\Phi_{r_i}$, we have
$$(\Phi_{r_i})_{\ast}(X_{(i)})=\frac{\partial}{\partial w}.$$
Then
 $$X_{(\infty)}=\lim_{i\to\infty}(\Phi_{r_i})_{\ast}(X_{(i)})=\frac{\partial}{\partial w}.$$
 Moreover, by  (\ref{identity-r}) and  Corollary \ref{uniform curvature decay},
 \begin{align}\label{x-maximal}
 |X_{(i)}|_{g_{r_i}}( x)=|\nabla f|(p_{r_i})
=\sqrt{R_{\rm max}}+o(1)>0, ~\forall~ x\in B(p_{r_i},d_{0} ;  {g_{r_i}}),
 \end{align}
as long as $r_i$ is large enough.  Thus, $X_{(\infty)}$ is nonzero and is tangent to $\mathbb{R}$ in $\mathbb{R}\times\mathbb{S}^{n-1}$. Since we have already proved that  $X_{(\infty)}$ is parallel,   $(\mathbb{R}\times\mathbb{S}^{n-1}, \widetilde g(0))$  must split off a line along  $\mathbb{R}$ in $\mathbb{R}\times\mathbb{S}^{n-1}$, and so does  the limit flow $\widetilde g(t)$. This proves  that
$N$ is diffeomorphic to   $\mathbb{S}^{n-1}$.

 Secondly,  we  prove  that $(\mathbb{S}^{n-1},g_{\mathbb{S}^{n-1}}(t))$ has    $\epsilon$-pinched Ricci curvature. In the other words,   Ricci curvature of $(M_{\infty},
\widetilde g(t))$ is  $\epsilon$-pinched along the vectors vertical to $X_{(\infty)}$.   By Gauss formula, we have
\begin{align*}
R(X,Y,Z,W)=\overline{R}(X,Y,Z,W)+\langle B(X,Z),B(Y,W)\rangle-\langle B(X,W),B(Y,Z)\rangle,
\end{align*}
where $X,Y,Z,W\in T\Sigma_{r}$ and $B(X,Y)=(\nabla_{X}Y)^{\bot}$. Note that
\begin{align*}
B(X,Y)=&\langle \nabla_{X}Y,\nabla f\rangle\cdot\frac{\nabla f}{|\nabla f|^{2}}\\
=&[\nabla_{X}\langle Y,\nabla f\rangle-\langle Y,\nabla_{X}\nabla f\rangle]\cdot\frac{\nabla f}{|\nabla f|^{2}}\\
=&-{\rm Ric}(X,Y)\cdot\frac{\nabla f}{|\nabla f|^{2}},
\end{align*}
Then
\begin{align}\label{ricci-convergence}
&R_{i'j'}=\overline{R}_{i'j'}\notag\\
&+R(\frac{\nabla f}{|\nabla f|},e_{i'},e_{j'},\frac{\nabla f}{|\nabla f|})-\frac{1}{|\nabla f|^{2}}\sum_k(R_{i'j'}R_{k'k'}-R_{i'k'}R_{k'j'}),
\end{align}
where  indices  $i',j',k'$ are corresponding  to a basis of   vector fields  on $T\Sigma_{r}$. Since $g_\infty(0)$  splits  along $X_{(\infty)}$, by the convergence of the rescaled metrics, we have
\begin{align}
R(\frac{\nabla f}{|\nabla f|},e_{i'},e_{j'},\frac{\nabla f}{|\nabla f|})=o(1)R_{i'j'},~\forall~x\in~B(p_{i},d_{0} ;  {g_{r_i}}).\notag
\end{align}
 On the other hand,  by (\ref{x-maximal}), it is easy to see
\begin{align}
0\le \frac{1}{|\nabla f|^{2}}\sum_k (R_{i'j'}R_{k'k'}-R_{i'k'}R_{k'j'})\le \frac{3}{2} R_{i'j'}.\notag
\end{align}
Thus by (\ref{ricci-convergence}) and  (\ref{pinching along level set}),  we get
\begin{align}
R_{i'j'}\ge \frac{1}{3}\overline{R}_{i'j'}\ge \frac{\epsilon}{3}Rg_{i'j'},~{\rm on}~B(p_{i},d_{0} ;  {g_{r_i}}),\notag
\end{align}
when $r_i$ are  large enough.
Since the above relation is independent of sequence of $\{p_i\}$ by Theorem \ref{theorem-soliton-real}, we prove in fact,
\begin{align}\label{original-pinching}
R_{i'j'}\ge \frac{1}{3}\overline{R}_{i'j'}\ge \frac{\epsilon}{3}Rg_{i'j'},~{\rm on}~B(p_{r},d_{0} ;  {g_{r}}),~\forall ~r>r_1>>1.
\end{align}

We show that (\ref{original-pinching}) holds along the flow $g_\tau(t)$.  By (\ref{mr-set}),  it is easy to  see
$$\{x\in M|~f(x)>r_1\}\subseteq \bigcup_{r>r_1}B(p_{r},d_{0} ;  {g_{r}}),$$
where $d_0\ge2\pi\sqrt{\frac{n-2}{\epsilon}}$.
Then by  (\ref{original-pinching}),
\begin{align}
R_{i'j'}\ge \frac{\epsilon}{3}Rg_{i'j'},~{\rm on}~\{x\in M|~f(x)>r_1\}.\notag
\end{align}
Since $\{x\in M|~f(x)\le r_1\}$ is compact and Ricci curvature is positive, we obtain
\begin{align}\label{pinching on the whole manifold}
R_{i'j'}(x)\ge \epsilon_0 Rg_{i'j'}(x),~\forall~x\in M,
\end{align}
for some $\epsilon_0>0$. Note that (\ref {pinching on the whole manifold}) is preserved under the metric scaling. Thus
(\ref{pinching on the whole manifold}) holds along the flow $g_\tau(t)$ for any $\tau>0$. By the convergence of  rescaled metrics, the Ricci curvature of $(M_{\infty},
\widetilde g(t))$ is  $\epsilon_0$-pinched along the vector fields vertical to $X_{(\infty)}$.  Therefore,  Ricci curvature of $(N=\mathbb{S}^{n-1},g_{N}(t))$ is   $\epsilon_0$-pinched.

By  the above  two steps,   we see that $ (N, g_{N}(t))$ is a $\kappa$-solution with  $\epsilon$-pinched Ricci curvature.
By Theorem \ref{classification of flow},    $ (N, g_{N}(t))$  must be a shrinking sphere flow. Note that $R^{(\infty)}(p_{\infty},0)=1$.  Then it is easy to see that  $(N, g_{N}(t))=g_{\mathbb{S}^{n-1}}(t)=(n-2)[(n-1)-2t]g_{\mathbb{S}^{n-1}}$.
The lemma is proved.

\end{proof}

In Lemma \ref{lem-for the theorem of pinching case-2},
 we prove the asymptotically cylindrical behavior of  the  steady soliton for  a special rescaling sequence with  base points $p_{r_i}$.   In the following, we  show that  the same result  holds  for an arbitrary sequence.

\begin{lem}\label{lem-for the theorem of pinching case-3}
Under the condition and notations of Lemma \ref{lem-for the theorem of pinching case-2}, let $p_i\to\infty$ be any sequence.   Then  by taking a subsequence of $p_i$ if necessary,   we have
\begin{align*}
(M,g_{p_{i}}(t),p_{i})\rightarrow(\mathbb{R}\times\mathbb{S}^{n-1},\widetilde{g}(t),p_{\infty}),~for~t\in(-\infty,0],
\end{align*}
where $g_{p_{i}}(t)=R(p_{i})g(R^{-1}(p_{i})t)$,  $(\mathbb{R}\times\mathbb{S}^{n-1},\widetilde{g}(t))$ is a shrinking cylinders flow  defined as in Lemma \ref{lem-for the theorem of pinching case-2}.
\end{lem}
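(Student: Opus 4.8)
The plan is to reduce everything to the special base points $p_{r_i}$ of Lemma~\ref{lem-for the theorem of pinching case-2} by a change-of-base-point argument, exploiting the fact that an arbitrary $p_i$ and the minimizer $p_{r_i}$ lie on the \emph{same} level set and are therefore uniformly close in the rescaled metric. Given an arbitrary sequence $p_i\to\infty$, set $r_i=f(p_i)$; by (\ref{cao}) we have $r_i\to\infty$, so $p_i\in\Sigma_{r_i}$ with $r_i\to\infty$. Let $p_{r_i}\in\Sigma_{r_i}$ be the scalar-curvature minimizer as in Lemma~\ref{lem-for the theorem of pinching case-1}. After passing to a subsequence, Lemma~\ref{lem-for the theorem of pinching case-2} gives
$$(M,g_{r_i}(t),p_{r_i})\to(\mathbb{R}\times\mathbb{S}^{n-1},\widetilde g(t),p_{\infty}),\qquad t\in(-\infty,0].$$

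First I would transfer the base point. Since $p_i,p_{r_i}\in\Sigma_{r_i}$, the containment (\ref{mr-set}) gives $d_{g_{r_i}(0)}(p_i,p_{r_i})\le A(\epsilon)=2\pi\sqrt{(n-2)/\epsilon}$, a bound uniform in $i$. Hence under the convergence maps of Lemma~\ref{lem-for the theorem of pinching case-2} the images of $p_i$ remain in a fixed ball of $\mathbb{R}\times\mathbb{S}^{n-1}$; passing to a further subsequence they converge to some $q_\infty$. Since Cheeger--Gromov convergence of pointed flows is insensitive to moving the base point within a bounded distance, this yields
$$(M,g_{r_i}(t),p_i)\to(\mathbb{R}\times\mathbb{S}^{n-1},\widetilde g(t),q_\infty),\qquad t\in(-\infty,0].$$

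The remaining, and main, difficulty is that the flow I actually want, $g_{p_i}(t)=R(p_i)g(R^{-1}(p_i)t)$, is rescaled by $R(p_i)$ rather than by $R(p_{r_i})$. Lemma~\ref{pinching of scalar curvature} only bounds the ratio $R(p_i)/R(p_{r_i})\le C(\epsilon)$ a priori; the key observation is that the homogeneity of the limit upgrades this to $R(p_i)/R(p_{r_i})\to 1$. Indeed $\widetilde g(0)$ has constant scalar curvature $1$ on the entire cylinder, so $C^\infty_{\mathrm{loc}}$ convergence of the scalar curvature forces $R_{g_{r_i}(0)}(p_i)=R(p_i)/R(p_{r_i})\to R_{\widetilde g(0)}(q_\infty)=1$. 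Writing
$$g_{p_i}(t)=\frac{R(p_i)}{R(p_{r_i})}\,g_{r_i}\!\Big(\tfrac{R(p_{r_i})}{R(p_i)}\,t\Big),$$
and noting that both the scale factor and the time reparametrization tend to $1$, I conclude that $(M,g_{p_i}(t),p_i)$ converges to the same limit $(\mathbb{R}\times\mathbb{S}^{n-1},\widetilde g(t),q_\infty)$.

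Finally, since the isometry group of the cylinder (translations along $\mathbb{R}$ together with rotations of $\mathbb{S}^{n-1}$) acts transitively and preserves the shrinking-cylinder flow $\widetilde g(t)$, I would compose with the isometry sending $q_\infty$ to $p_\infty$ to obtain the stated convergence
$$(M,g_{p_i}(t),p_i)\to(\mathbb{R}\times\mathbb{S}^{n-1},\widetilde g(t),p_\infty).$$
The crux is the base-point transfer of the second step combined with reconciling the two normalizations in the third: the uniform diameter bound in (\ref{mr-set}) (equivalently (\ref{diameter-mr})) and the constancy of the scalar curvature on each time-slice of the limit cylinder are precisely what make the ratio $R(p_i)/R(p_{r_i})$ tend to $1$ rather than merely stay bounded.
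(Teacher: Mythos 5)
Your proposal is correct, but it takes a genuinely different route from the paper. The paper's proof does not transfer base points at all: it uses the scalar curvature ratio bound on level sets (Lemma \ref{pinching of scalar curvature}) to show that the induced metric satisfies $\overline{R}_{i'j'}\ge \epsilon C(\epsilon)^{-1}R(p)\overline{g}_{i'j'}$ for an \emph{arbitrary} $p\in\Sigma_r$, hence by Myers' theorem obtains the sandwich containment (\ref{diameter-estimate-21}) for $M_{p,k}'$ with $p$ as center, and then re-runs the entire argument of Lemma \ref{lem-for the theorem of pinching case-2} (construction of the diffeomorphisms $\Phi$, splitting off the line via the parallel limit of $R(p_i)^{-1/2}\nabla f$, pinching of the limit, and the classification Theorem \ref{classification of flow}) with the sequence $(M,g_{p_i}(t),p_i)$ in place of $(M,g_{r_i}(t),p_{r_i})$. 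You instead use Lemma \ref{lem-for the theorem of pinching case-2} as a black box and reduce to it by (a) moving the base point within the uniformly bounded rescaled distance $A(\epsilon)$ guaranteed by (\ref{mr-set}), and (b) reconciling the two normalizations via the observation that $R(p_i)/R(p_{r_i})=R_{g_{r_i}(0)}(p_i)\to R_{\widetilde g(0)}(q_\infty)=1$, since the limit cylinder has constant scalar curvature $1$ on its time-$0$ slice; the final translation/rotation of the cylinder to relabel $q_\infty$ as $p_\infty$ is legitimate since these isometries preserve the shrinking flow. Your argument is shorter, avoids repeating the splitting and classification steps, and your key observation in (b) is essentially the mechanism the paper only exploits afterwards, in the proof of Corollary \ref{cor of lem of theorem 1}; the paper's longer route, on the other hand, yields the containment (\ref{diameter-estimate-21}) for arbitrary centers as an explicit intermediate statement, which is what it actually cites later (e.g.\ in the proof of that corollary). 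Both arguments ultimately rest on the same two ingredients: the uniform rescaled diameter bound of $\Sigma_r$ from (\ref{diameter-mr})/(\ref{mr-set}), and the rigidity of the limit.
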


\begin{proof}By Corollary \ref{uniform curvature decay},   there exists a  $C(\epsilon)$ such that
$$\frac{R(x)}{R(p)}\leq C(\epsilon),~\forall ~x, p\in \Sigma_{r}.$$
By  (\ref{pinching along level set}),  it follows
\begin{align*}
\overline{R}_{i'j'}(x)\geq \epsilon R(x)\overline{g}_{i'j'}\geq \epsilon  C(\epsilon)^{-1} R(p)\overline{g},
~\forall ~x, p\in \Sigma_{r}.
\end{align*}
Thus,  by the Myer's theorem,     we get as in
(\ref{diameter-mr}),
\begin{align}
{\rm diam}(\Sigma_{r},g)\le 2\pi\sqrt{\frac{(n-2)C(\epsilon)}{\epsilon R(p)}},~\forall~p\in \Sigma_{r}.\notag
\end{align}
Following the argument in Lemma \ref{lem-for the theorem of pinching case-1} (also see Remark \ref{remark} ),  we  see that for any $k\in\mathbb{N}$, there exists $r^{\prime}(k,\epsilon)$ such that for any $r\geq r^{\prime}(k,\epsilon)$ and $p\in \Sigma_{r}$ it holds
\begin{align}\label{diameter-estimate-21}
B(p,\frac{k}{\sqrt{R_{max}}};   {g_{p}})\subset M_{p,k}^{\prime}\subset B(p,2\pi\sqrt{\frac{(n-2)C(\epsilon)}{\epsilon}}+\frac{2k}{\sqrt{R_{max}}};  {g_{p}}),
\end{align}
where $g_{p}=R(p)g$ and $M_{p,k}^{\prime}$ is defined as
$$M_{p,k}^{\prime}=\{x\in M|~ f(p) -\frac{k}{\sqrt{R(p)}}\le f(x)\le f(p)+\frac{k}{\sqrt{R(p)}}\}.$$
Once  (\ref{diameter-estimate-21}) is true,  we can use  the argument  in the proof of  Lemma \ref{lem-for the theorem of pinching case-2}  to   the sequence   $(M,g_{p_{i}}(t),p_{i})$ to prove Lemma \ref{lem-for the theorem of pinching case-3}.
\end{proof}

As a corollary of Lemma \ref{lem-for the theorem of pinching case-3},  we get

\begin{cor}\label{cor of lem of theorem 1}
Under the condition of   Theorem \ref{theorem-pinching case}, we have
\begin{align}\label{eq:theorem-2}
\lim_{r\rightarrow\infty}\sup_{x,y\in \Sigma_{r}}|\frac{R(x)}{R(y)}-1|=0.
\end{align}
\end{cor}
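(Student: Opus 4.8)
The plan is to argue by contradiction, reducing the statement to the asymptotic classification already obtained in Lemma \ref{lem-for the theorem of pinching case-3}: on the limiting shrinking cylinder $(\mathbb{R}\times\mathbb{S}^{n-1},\widetilde g(t))$ the scalar curvature is constant on each cross-section, so the curvature on $\Sigma_r$ cannot oscillate in the limit. Since each $\Sigma_r$ with $r>f(o)$ is compact by Lemma \ref{topology of level set} and $R$ is continuous and strictly positive there, the supremum $\sup_{x,y\in\Sigma_r}|R(x)/R(y)-1|$ is attained. If (\ref{eq:theorem-2}) failed, there would exist $\delta>0$, a sequence $r_i\to\infty$, and points $x_i,y_i\in\Sigma_{r_i}$ realizing the supremum with
$$\left|\frac{R(x_i)}{R(y_i)}-1\right|\ge\delta.$$
Because $f(x_i)=r_i\to\infty$ forces $\rho(x_i,o)\to\infty$ by (\ref{cao}), the base points $x_i$ run off to infinity, so Lemma \ref{lem-for the theorem of pinching case-3} is applicable to the sequence $\{x_i\}$.

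First I would pass to a subsequence along which the rescaled flows $(M,g_{x_i}(t),x_i)$, with $g_{x_i}(t)=R(x_i)g(R^{-1}(x_i)t)$, converge in the Cheeger--Gromov sense to $(\mathbb{R}\times\mathbb{S}^{n-1},\widetilde g(t),p_\infty)$. The essential point is that the companion point $y_i$ remains at a fixed $g_{x_i}$-distance from $x_i$. Indeed, the diameter estimate recorded in the proof of Lemma \ref{lem-for the theorem of pinching case-3}, which rests on Myers' theorem applied along $\Sigma_{r_i}$ together with the uniform ratio bound $R(x)/R(p)\le C(\epsilon)$ of Corollary \ref{uniform curvature decay}, gives
$$\mathrm{diam}(\Sigma_{r_i},g)\le 2\pi\sqrt{\frac{(n-2)C(\epsilon)}{\epsilon R(x_i)}}.$$
Rescaling by $g_{x_i}=R(x_i)g$ yields $d_{g_{x_i}}(x_i,y_i)\le \mathrm{diam}(\Sigma_{r_i},g_{x_i})\le 2\pi\sqrt{(n-2)C(\epsilon)/\epsilon}=:D$, a constant independent of $i$. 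Hence $y_i$ lies in the fixed ball $B(x_i,D;g_{x_i})$, where the convergence is smooth.

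Next I would transport $y_i$ through the diffeomorphisms $\Phi_{r_i}$ built in the proof of Lemma \ref{lem-for the theorem of pinching case-2} (based now at $x_i$), for which $(\Phi_{r_i}^{-1})^\ast g_{x_i}\to\widetilde g(0)$ in $C^\infty_{loc}$. Since $f(y_i)=f(x_i)$, the image $\Phi_{r_i}(y_i)$ has vanishing $\mathbb{R}$-coordinate, and it stays in a fixed ball about the limit base point because $d_{g_{x_i}}(x_i,y_i)\le D$; passing to a further subsequence, $\Phi_{r_i}(y_i)\to y_\infty\in\{0\}\times\mathbb{S}^{n-1}$. The crucial geometric fact is that $\widetilde g(0)=ds\otimes ds+(n-2)(n-1)g_{\mathbb{S}^{n-1}}$ is a Riemannian product of a flat line and a round sphere, so its scalar curvature equals the constant $1$ over the whole cylinder. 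By continuity of scalar curvature under $C^\infty_{loc}$ convergence,
$$\frac{R(y_i)}{R(x_i)}=R_{g_{x_i}}(y_i)=R_{(\Phi_{r_i}^{-1})^\ast g_{x_i}}\big(\Phi_{r_i}(y_i)\big)\longrightarrow R_{\widetilde g(0)}(y_\infty)=1.$$
Therefore $R(x_i)/R(y_i)\to 1$, contradicting $|R(x_i)/R(y_i)-1|\ge\delta$, and (\ref{eq:theorem-2}) follows.

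The only delicate point, and the step I expect to carry the weight of the argument, is the uniform confinement of the pair $(x_i,y_i)$ to a $g_{x_i}$-ball of radius independent of $i$: this is precisely what the horizontal pinching hypothesis buys, through Myers' theorem and the uniform ratio bound of Corollary \ref{uniform curvature decay} (a strengthening of the qualitative bound in Lemma \ref{pinching of scalar curvature}). Once both points sit in a fixed ball on which the Cheeger--Gromov convergence is smooth, the conclusion is immediate from the constancy of scalar curvature on the limiting cylinder, and no curvature estimates beyond those already collected are required.
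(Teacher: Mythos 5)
Your proposal is correct and is essentially the paper's own proof: the same contradiction setup with sequences $x_i,y_i\in\Sigma_{r_i}$, the same appeal to Lemma \ref{lem-for the theorem of pinching case-3} based at $x_i$, the same confinement of $y_i$ in a fixed $g_{x_i}$-ball via the Myers/ratio-bound diameter estimate (\ref{diameter-estimate-21}), and the same conclusion from the constancy $\widetilde R(\cdot,0)\equiv 1$ on the limiting cylinder. No meaningful difference in approach.
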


\begin{proof}
Suppose that  (\ref{eq:theorem-2}) is not true. Then, we can find $\delta>0$ and two sequences $\{x_{i}\},  \{y_{i}\} \rightarrow\infty$ such that $f(x_{i})=f(y_{i})$ and
\begin{equation}\label{cor of lem4.4}
|\frac{R(x_{i})}{R(y_{i})}-1|>\delta,~as~i\rightarrow\infty.
\end{equation}
Applying  Lemma \ref{lem-for the theorem of pinching case-3} to the sequence  $\{x_{i}\}$, we get
$$(M,g_{x_{i}}(t),x_{i})\rightarrow(\mathbb{R}\times\mathbb{S}^{n-1},\widetilde{g}(t),x_{\infty}),  $$
where $g_{x_{i}}(t)=R(x_{i})g(R^{-1}(x_{i})t)$ and $\widetilde{g}(t)$ is defined as in Lemma \ref{lem-for the theorem of pinching case-3}.  By (\ref{diameter-estimate-21}), we have
\begin{align}
y_{i}\in \Sigma_{f(x_i)}\subset B(x_{i}, 2\pi\sqrt{\frac{(n-2)C(\epsilon)}{\epsilon}};g_{x_{i}}),
\end{align}
where $g_{x_{i}}=g_{x_{i}}(0)$.
According to the convergence of $(M,g_{x_{i}}(t),x_{i})$, we can find a sequence of diffeomorphism $\Phi_{i}$ such that  $\Phi_{i}^{\ast}(g_{x_{i}}(t))$ converges  to $\widetilde{g}(t)$ in $C_{\rm loc}^{\infty}$ topology and $\Phi_{i}(y_{i})\in B(x_{\infty},3\pi\sqrt{\frac{(n-2)C(\epsilon)}{\epsilon}};\widetilde{g}(0))$. As a  consequence, there is a subsequence $y_{i_{k}}$ such that $\Phi_{i_{k}}(y_{i_{k}})\rightarrow y_{\infty}$ in $B(x_{\infty},3\pi\sqrt{\frac{(n-2)C(\epsilon)}{\epsilon}}; $\newline $\widetilde{g}(0))$.   Note
$$\widetilde{R}(q,0)\equiv1, ~\forall~q\in \mathbb{R}\times\mathbb{S}^{n-1}.$$
Thus
$$\frac{R(y_{i_{k}})}{R(x_{i_{k}})}\rightarrow\widetilde{R}(y_{\infty},0)=1,~as~i_{k}\rightarrow\infty.$$
This  contradicts to (\ref{cor of lem4.4}). Hence, the corollary is true.
 \end{proof}

Since the sequence in Lemma \ref{lem-for the theorem of pinching case-3} and the sequence in Definition \ref{brendle} are  the same up to a scale,   to finish  the proof of Theorem \ref{theorem-pinching case}, we need to derive  the curvature decay property (i) in Definition \ref{brendle}.

\begin{proof}[Proof of Theorem \ref{theorem-pinching case}]
Let $\phi_{t}$ be  the  one parameter subgroup  generated by $-\nabla f$ and $g(t)=\phi_{t}^{\ast}g$ as before.
Let any  $p\in M$ such that $\nabla f(p)\neq0$ and $p_i=\phi_{t_{i}}(p)$, where $t_{i}\rightarrow-\infty$. Then by Lemma \ref{lem-for the theorem of pinching case-3},   we may assume that  sequence $(M, g_{r_{i}}(t), p_{i})$ converges to a shrinking cylinders flow $(\mathbb{R}\times\mathbb{S}^{n-1},\widetilde{g}(t))$, where
$$\widetilde{g}(t)=ds\otimes ds+(n-2)[(n-1)-2t]g_{\mathbb{S}^{n-1}}.$$
Thus   scalar curvature $\widetilde R(\cdot, t)$ of $\widetilde g( t)$ is given by
$$\widetilde R(\cdot, t)=\frac{n-1}{(n-1)-2t}.$$
It follows
$$\frac{\partial}{\partial t} \widetilde R(p_{\infty},0)=\frac{2}{n-1}.$$
On the other hand,  by the convergence of $(M,g_{r_{i}}(t), p_{i})$, we have
\begin{align}
\frac{\partial}{\partial t}\widetilde R(p_{\infty},0)=\lim_{i\rightarrow\infty} \frac{1}{R^{2}(p_{i},0)}\frac{\partial }{\partial t} R(p_{i},0)=\lim_{i_{k}\rightarrow\infty} \frac{1}{R^{2}(p,-t_{i_{k}})}\frac{\partial}{\partial t} R(p,-t_{i}).\notag
\end{align}
Hence we get
\begin{align}
\lim_{i\rightarrow\infty}F^{\prime}(t_{i})=\frac{n-1}{2},\notag
\end{align}
where $F(t)=R^{-1}(p,-t)$.
Since $t_i$ is an arbitrary sequence,
\begin{align}
\lim_{t \rightarrow-\infty}F^{\prime}(t)=\frac{n-1}{2},\notag
\end{align}
This implies
\begin{align}
\lim_{t\rightarrow\infty}\frac{1}{tR(p,-t)}=\lim_{t\rightarrow\infty}F^{\prime}(t)=\frac{n-1}{2}.\notag
\end{align}
Therefore, we derive
\begin{align}\label{scalar-curvature-t}
R(\phi_t(p)) |t| \rightarrow \frac{n-1}{2}, ~{\rm as}~t\rightarrow-\infty.
\end{align}

By the identity (\ref{identity-r}) and Corollary \ref{uniform curvature decay}, we have
\begin{align}\label{r-max} \lim_{t\rightarrow\infty}\frac{f(\phi_{-t}(p))}{t}=\lim_{t\rightarrow\infty}\frac{d f(\phi_{-t}(p))}{dt}=\lim_{t\rightarrow\infty}|\nabla f|^{2}=R_{\max}.
\end{align}
On the other hand, by  Corollary \ref{cor of lem of theorem 1},  it holds
\begin{align}
\lim_{r\rightarrow\infty}\sup_{x,y\in \Sigma_{r}}|\frac{R(x)}{R(y)}-1|=0.\notag
\end{align}
 Since $f(\phi_{t}(p))$ goes to $\infty$ as $t\to -\infty$,  we see that for any $x\in M\setminus\{o\}$ with $f(x)\ge r_0$,   there exists a  $t_{x}<0$  such that $f(x)=f(\phi_{t_{x}}(p))$. Thus
$$\lim_{f(x)\rightarrow\infty}|\frac{R(x)}{R(\phi_{t_{x}}(p))}-1|=0.$$
Combining  this with (\ref{scalar-curvature-t}) and (\ref{r-max}),  we deduce
$$\lim_{f(x)\rightarrow\infty}R(x)f(x)=\lim_{t_{x}\rightarrow-\infty}R(\phi_{t_{x}}(p))f(\phi_{t_{x}}(p))=\frac{n-1}{2}R_{\max}.$$
By (\ref{cao}), we finally  get  the property (i) in Definition \ref{brendle}.

\end{proof}

 If  (\ref{pinching along level set}) is  replaced  for the ambient metric $g$  by
 \begin{align}\label{m-pinching}
{\rm Ric}(v,v)\geq \epsilon R(x)g(v,v), ~\forall~ v\in T_{x}\Sigma_{r}, ~r>r_0,
\end{align}
we give another version of Theorem \ref{theorem-pinching case} as follows.

\begin{cor}\label{theorem-original pinching condition}
Any $\kappa$-noncollapsed steady Ricci soliton $(M,g,f)$  with nonnegative curvature operator, positive Ricci curvature and a uniform scalar curvature decay  must be rotationally symmetric, if $(M,g,f)$ admits a unique equilibrium point  and
 there exist  $r_0>0$ and $\epsilon>0$ such that the pinching  condition (\ref{m-pinching}) holds.
\end{cor}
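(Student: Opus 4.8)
The plan is to reduce Corollary~\ref{theorem-original pinching condition} to Theorem~\ref{theorem-pinching case}. Concretely, I would show that once the uniform decay (\ref{r-decay-uniform}) is available, the ambient pinching (\ref{m-pinching}) already forces the horizontal (intrinsic) pinching (\ref{pinching along level set}) on all far-out level sets, with a slightly smaller constant. Since all the other hypotheses --- nonnegative curvature operator, positive Ricci curvature, a unique equilibrium point and $\kappa$-noncollapsing --- coincide with those of Theorem~\ref{theorem-pinching case}, that theorem then applies verbatim and gives the asymptotically cylindrical behaviour, hence rotational symmetry.

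The bridge is the Gauss equation (\ref{ricci-convergence}), which I rewrite as
$$\overline{R}_{i'j'}=R_{i'j'}-R\Big(\frac{\nabla f}{|\nabla f|},e_{i'},e_{j'},\frac{\nabla f}{|\nabla f|}\Big)+\frac{1}{|\nabla f|^{2}}\sum_{k}\big(R_{i'j'}R_{k'k'}-R_{i'k'}R_{k'j'}\big).$$
By (\ref{m-pinching}) the first term satisfies $R_{i'j'}\ge \epsilon R\,\overline g_{i'j'}$. The quadratic term is a nonnegative form (in an eigenbasis of the tangential Ricci tensor its eigenvalues are $\lambda_{i}\sum_{k\neq i}\lambda_{k}\ge 0$) and, using $|{\rm Ric}|\le R$ together with $|\nabla f|^{2}\ge C_{0}$ from (\ref{nabla-f}), it is of size $O(R^{2})$, so by (\ref{r-decay-uniform}) it is negligible compared with $R$ and, being nonnegative, only helps. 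The entire difficulty is therefore concentrated in the radial term $R(\tfrac{\nabla f}{|\nabla f|},e_{i'},e_{j'},\tfrac{\nabla f}{|\nabla f|})$.

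The key step I would establish is that this radial term is $o(R)$. Since the curvature operator is nonnegative, the radial curvature form is nonnegative and its trace over $T_{x}\Sigma_{r}$ equals ${\rm Ric}(\nu,\nu)={\rm Ric}(\nabla f,\nabla f)/|\nabla f|^{2}$, where $\nu=\nabla f/|\nabla f|$; a nonnegative form is bounded by its trace times the metric, so it is enough to prove ${\rm Ric}(\nabla f,\nabla f)=o(R)$. From the soliton identity $\nabla R=-2\,{\rm Ric}(\nabla f,\cdot)$ one gets ${\rm Ric}(\nabla f,\nabla f)\le\tfrac12|\nabla R|\,|\nabla f|$, so I need the sharpened gradient estimate $|\nabla R|\le C R^{3/2}$. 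This I would obtain by rescaling \emph{at the point itself}, $g_{x}=R(x)g$: by Theorem~\ref{theorem-soliton-real} and the bounded-curvature-on-balls estimate (\ref{bounded-curvature}) (which holds for an arbitrary base point going to infinity and uses only the standing hypotheses, not any pinching), the curvature of $g_{x}$ is uniformly bounded on unit $g_{x}$-balls, and Shi's local derivative estimate gives $|\nabla R|(x)\le C R(x)^{3/2}$ with $C$ independent of $x$. Together with $|\nabla f|^{2}\ge C_{0}>0$ this yields ${\rm Ric}(\nu,\nu)(x)=O(R(x)^{3/2})=o(R(x))$ as $\rho(x)\to\infty$, by (\ref{r-decay-uniform}).

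Feeding these bounds back into the Gauss equation, for all sufficiently large $r$ and every $v\in T_{x}\Sigma_{r}$ one obtains
$$\overline{{\rm Ric}}(v,v)\ge \epsilon R\,\overline g(v,v)-o(R)\,\overline g(v,v)\ge \tfrac{\epsilon}{2}R\,\overline g(v,v),$$
which is exactly (\ref{pinching along level set}) with constant $\epsilon/2$. Hence $(M,g,f)$ is horizontally $\tfrac{\epsilon}{2}$-pinched and Theorem~\ref{theorem-pinching case} finishes the proof. I expect the main obstacle to be precisely the radial bound ${\rm Ric}(\nu,\nu)=o(R)$: the crude estimate $|\nabla R|=O(R)$ (from $|{\rm Ric}|\le R$ and $|\nabla f|$ bounded) only gives ${\rm Ric}(\nu,\nu)=O(R)$, which is insufficient, so the improvement to $R^{3/2}$ through the pointwise Shi estimates on the rescaled $\kappa$-solutions is the crux.
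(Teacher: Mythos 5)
Your proposal is correct, and its skeleton coincides with the paper's proof: both arguments verify (\ref{pinching along level set}) and then invoke Theorem \ref{theorem-pinching case}, both use the same Gauss-equation decomposition, both dismiss the quadratic (second fundamental form) term as $O(R^{2})=o(1)R_{i'i'}$, and both use nonnegativity of the curvature operator to bound the radial term by its trace ${\rm Ric}(\nu,\nu)$. The genuine difference is at the crux you correctly isolated, namely ${\rm Ric}(\nu,\nu)=o(R)$. The paper stays with soliton identities, writing $2|\nabla f|^{2}{\rm Ric}(\nu,\nu)=|\langle\nabla R,\nabla f\rangle|=|\Delta R+2|{\rm Ric}|^{2}|$, and then quotes Proposition 3.8 of \cite{DZ2} ($|\Delta R|\le CR^{2}$) to get the sharper bound ${\rm Ric}(\nu,\nu)=O(R^{2})$; you instead bound ${\rm Ric}(\nu,\nu)\le |\nabla R|/(2|\nabla f|)$ and establish the Perelman-type gradient estimate $|\nabla R|\le CR^{3/2}$ by rescaling at the point and applying Shi's local estimates, which gives $O(R^{3/2})$ --- weaker, but all that is needed. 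Your supporting claims check out: the curvature bound on unit balls of $R(x)g$ at arbitrary base points requires only the hypotheses of Theorem \ref{theorem-soliton-real} (compactness of rescaled $\kappa$-solutions), not any pinching; and Shi's interior estimates apply on parabolic cylinders because, by (\ref{monoto-scalar}), $R(y,t)=R(\phi_{t}(y))$ only decreases going backward in time, so the time-zero curvature bound propagates to $t\le 0$. In effect you are reproving Perelman's classical estimate $|\nabla R|\le\eta R^{3/2}$ for $\kappa$-solutions, whereas the paper leans on an external elliptic estimate from \cite{DZ2}; your route is self-contained modulo compactness machinery the paper already deploys elsewhere (cf.\ the Shi estimates in the proof of Lemma \ref{lem-for the theorem of pinching case-2}), while the paper's is shorter given the citation and yields quadratic decay of the radial term. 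One cosmetic remark: your sign convention for the quadratic term (entering $\overline{R}_{i'j'}$ with a plus, so that it \emph{helps}) agrees with (\ref{ricci-convergence}); the paper's display (\ref{horizontal estimate-1}) carries the opposite sign, but this is immaterial there since only absolute values are estimated.
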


\begin{proof}
By Theorem \ref{theorem-pinching case}, it suffices  to verify that  $(M,g,f)$ satisfies (\ref{pinching along level set}).
Let $e_{i'}(i'=1,2,\cdots,n-1)$ be  normal eigenvector fields of $\overline{\rm Ric} (\overline{g})$. Then we  have Gauss equation
\begin{align}\label{horizontal estimate-1}
&\overline{\rm R}_{i'i'}=R_{i'i'}\notag\\
&-R(\frac{\nabla f}{|\nabla f|},e_{i'},e_{i'},\frac{\nabla f}{|\nabla f|})-\frac{1}{|\nabla f|^2}\sum_{k'}(R_{i'i'}R_{k'k'}-R_{i'k'}R_{k'i'}).
\end{align}
Note that $R(x)$ decays uniformly.  We have
\begin{align}
|\nabla f|(x)\ge\frac{\sqrt{R_{\max}}}{2},~\forall~r(x)>r_0,\notag
\end{align}
for some $r_0>0$. Thus by (\ref{m-pinching}), it follows
\begin{align}
|\sum_{k'}(R_{i'i'}R_{k'k'}-R_{i'k'}R_{k'i'})|\le& R_{i'i'}R+\sum_{k'}R_{i'k'}R_{k'i'}\notag\\
\le&R_{i'i'}R+R^2
\le(1+\frac{1}{\epsilon})R_{i'i'}R
=o(1)R_{i'i'}.\label{horizontal estimate-2}
\end{align}
Also we have
\begin{align}\label{f-ei-curvature}
|R(\frac{\nabla f}{|\nabla f|},e_{i'},e_{i'},\frac{\nabla f}{|\nabla f|})|\le&|\sum_{i'=1}^{n-1} R(\frac{\nabla f}{|\nabla f|},e_{i'},e_{i'},\frac{\nabla f}{|\nabla f|})|\notag\\
=&|{\rm Ric}(\frac{\nabla f}{|\nabla f|},\frac{\nabla f}{|\nabla f|})|\notag\\
=&\frac{|\langle\nabla R,\nabla f\rangle|}{2|\nabla f|^2}\notag\\
=&\frac{|\Delta R+2|{\rm Ric}|^2|}{2|\nabla f|^2}.
\end{align}

On the other hand, by
 Proposition 3.8 in \cite{DZ2}, we see that there exists a constant $C>0$ such that
\begin{align}
\frac{|\Delta R|(x)}{R^{2}(x)}\le C,~\forall~x\in M.\notag
\end{align}
Then by (\ref{f-ei-curvature}) and  (\ref{m-pinching}), we get
\begin{align}
&|R(\frac{\nabla f}{|\nabla f|},e_{i'},e_{i'},\frac{\nabla f}{|\nabla f|})|\notag\\
&\le\frac{(C+2)R^{2}}{|\nabla f|^2}
\le\frac{(C+2)R}{\epsilon|\nabla f|^2}R_{i'i'}
=o(1)R_{i'i'}.\notag
\end{align}
Combining this with (\ref{horizontal estimate-1}) and (\ref{horizontal estimate-2}),   we obtain
\begin{align}
\overline{\rm R}_{i'i'}(x)\ge \frac{1}{2}{\rm R}_{i'i'}(x)\ge \frac{\epsilon}{2}R(x),~{\rm as}~r(x)>r_0.\notag
\end{align}
This implies (\ref{pinching along level set}).
\end{proof}

\section{Proof of Theorem \ref{main-theorem-1}}
In this section, we  apply Theorem   \ref{theorem-pinching case}  to prove Theorem \ref{main-theorem-1}  by verifying   (\ref{pinching along level set})    for steady  Ricci solitons  under  the curvature decay  condition (\ref{special curvature decay}).

We begin with

\begin{lem}\label{lem-for main theorem-1} Let  $p_{i}\in M\rightarrow\infty$ and $(M, g_i(t),p_{i})$ be a sequence of rescaling Ricci flows,  where $g_i(t)=R(p_{i})g(R^{-1}(p_{i})t)$. Suppose that $(M, g_i(t),p_{i})$
 converges to a limit flow  $(M_{\infty},g_{\infty}(t),p_{\infty})$ in the Cheeger-Gromov topology. Then  scalar curvature $R^{(\infty)}(x,t)$ of $g_{\infty}(t)$ satisfies
\begin{align}\label{asymtotic-r-t}(1-\frac{t}{C_0})R^{(\infty)}(x,t)\equiv1,~\forall~x\in M_{\infty},~t\in (-\infty,0].
\end{align}
if  $(M,g,f)$  admits  a unique equilibrium point and it  satisfies (\ref{special curvature decay}).

\end{lem}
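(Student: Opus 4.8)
The plan is to compute the rescaled scalar curvature directly along the gradient flow of $-\nabla f$ and to show that it converges, uniformly on bounded balls, to the spatially constant profile $C_0/(C_0-t)$.

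First I would record the consequences of (\ref{special curvature decay}). Since $R$ attains its maximum $R_{\max}=R(o)$ at the equilibrium point (by the identity (\ref{identity-r}) and the monotonicity (\ref{monoto-scalar})), and $f(x)\to\infty$ as $\rho(x)\to\infty$ by (\ref{cao}), (\ref{special curvature decay}) can be rewritten as
$$R(x)=\frac{C_0R_{\max}+o(1)}{f(x)},\quad\text{as }f(x)\to\infty,$$
so that $R(x)\to0$ and, by (\ref{identity-r}), $|\nabla f|^2=R_{\max}-R\to R_{\max}$ at infinity. Writing $\phi_s$ for the flow of $-\nabla f$ and setting $\tau_i=R^{-1}(p_i)t$, we have $g_i(t)=R(p_i)\phi_{\tau_i}^{\ast}g$, so $\phi_{\tau_i}$ is an isometry from $(M,g_i(t))$ to $(M,g_i(0))$. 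Hence the scalar curvature of $g_i(t)$ at a point $x$ equals $R^{-1}(p_i)R(\phi_{\tau_i}(x))$, and it suffices to control $R$ on a fixed $g$-ball around $p_i':=\phi_{\tau_i}(p_i)$.

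Next I would locate $p_i'$ in terms of $f$. Along the orbit, $\frac{d}{ds}f(\phi_s(p_i))=-|\nabla f|^2=-(R_{\max}-R(\phi_s(p_i)))$, and by (\ref{monoto-scalar}) the function $s\mapsto R(\phi_s(p_i))$ is nondecreasing; since $t\le 0$ forces $\tau_i\le 0$, this gives $R(\phi_s(p_i))\le R(p_i)$ for $s\in[\tau_i,0]$. Integrating,
$$f(p_i')=f(p_i)-R_{\max}\tau_i+\int_0^{\tau_i}R(\phi_s(p_i))\,ds,$$
and the last integral is bounded by $R(p_i)|\tau_i|=|t|=O(1)$, hence is lower order compared with $R_{\max}|\tau_i|=R_{\max}R^{-1}(p_i)|t|$. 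Using $f(p_i)=R^{-1}(p_i)(C_0R_{\max}+o(1))$, this yields $f(p_i')=R^{-1}(p_i)R_{\max}(C_0-t)(1+o(1))$, and therefore $R^{-1}(p_i)R(p_i')\to C_0/(C_0-t)$, which is precisely the value asserted by (\ref{asymtotic-r-t}) at the base point. Finally I would upgrade this to all $x$. For $x\in B(p_i,d;g_i(t))$ the image $z=\phi_{\tau_i}(x)$ lies in $B(p_i',d;g_i(0))$, i.e. at $g$-distance at most $d/\sqrt{R(p_i)}$ from $p_i'$; since $|\nabla f|\le\sqrt{R_{\max}}$ we get $|f(z)-f(p_i')|\le\sqrt{R_{\max}}\,d/\sqrt{R(p_i)}$, whence
$$\frac{|f(z)-f(p_i')|}{f(p_i')}=O\!\big(\sqrt{R(p_i)}\,\big)\to 0$$
uniformly on the ball. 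By (\ref{special curvature decay}) this forces $R(z)/R(p_i')\to1$, so $R^{-1}(p_i)R(\phi_{\tau_i}(x))\to C_0/(C_0-t)$ uniformly for $x$ in the ball. Passing to the Cheeger--Gromov limit identifies $R^{(\infty)}(x,t)\equiv C_0/(C_0-t)$ on every ball, which is exactly (\ref{asymtotic-r-t}).

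The step I expect to be most delicate is the control of the correction term $\int_0^{\tau_i}R(\phi_s(p_i))\,ds$: the naive bound $R\le R_{\max}$ only shows it is of the \emph{same} order as the main term $R_{\max}\tau_i$, and one genuinely needs the monotonicity (\ref{monoto-scalar}) — which gives $R\le R(p_i)$ on the whole interval $[\tau_i,0]$ — to see that it is lower order. Everything else, namely the slow variation of $f$ on $g_i$-balls and the passage to the limit, reduces to the uniform form of (\ref{special curvature decay}) together with $|\nabla f|\le\sqrt{R_{\max}}$.
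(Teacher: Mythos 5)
Your proof is correct, and it rests on the same underlying mechanism as the paper's: use (\ref{special curvature decay}) together with (\ref{cao}) to trade $R(x)$ for $(C_0R_{\max}+o(1))/f(x)$, compute $f$ along the integral curves of $-\nabla f$ by integrating $\frac{d}{ds}f(\phi_s)=-|\nabla f|^2=-(R_{\max}-R)$, and control spatial variation on rescaled balls via the Lipschitz bound $|\nabla f|\le\sqrt{R_{\max}}$ (which is exactly what the paper's containment (\ref{relation-1}) amounts to). The organization, however, is genuinely different. The paper fixes an auxiliary reference point $p$ and transfers everything to its orbit: it chooses times $\tau_i$ implicitly by matching $f$-levels, $f(\phi_{\tau_i}(p))=f(x_i)$, proves the two-orbit comparison (\ref{same-order}), and then evaluates $R^{(\infty)}(x,t)$ through a chain of ratio limits using the asymptotics (\ref{r-max}) along the single orbit of $p$. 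You instead integrate directly along the orbit of the base point $p_i$ itself, obtaining $f(\phi_{R^{-1}(p_i)t}(p_i))=f(p_i)+R_{\max}R^{-1}(p_i)|t|+\mathrm{error}$ in one stroke; this eliminates the auxiliary orbit and the implicit definition of $\tau_i$, and is arguably cleaner bookkeeping for the same computation. One remark: your closing claim that the error term $\int_0^{\tau_i}R(\phi_s(p_i))\,ds$ \emph{genuinely} requires the monotonicity (\ref{monoto-scalar}) is overstated. Every point $\phi_s(p_i)$ with $s\in[R^{-1}(p_i)t,0]$ satisfies $f(\phi_s(p_i))\ge f(p_i)\to\infty$, so the uniform decay of $R$ at spatial infinity (a consequence of (\ref{special curvature decay}) and (\ref{cao}), and the fact the paper invokes at the corresponding step in the form $|\nabla f|^2\to R_{\max}$ uniformly along orbits) already makes this term $o(1)\cdot R^{-1}(p_i)|t|$, which is lower order against the main term $R_{\max}R^{-1}(p_i)|t|$; your monotonicity bound $|t|=O(1)$ is sharper but not necessary. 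Either way the step is sound, since (\ref{monoto-scalar}) is available in the setting of Theorem \ref{main-theorem-1}.
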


\begin{proof} First we note that  $R(g)$  decays uniformly by (\ref{special curvature decay}) and  (\ref{cao}). For any $x\in
B(p_\infty, g_\infty(0);  {r_0})\subset M_\infty$, we  choose  a sequence of $x_{i}\in B(p_{i},2r_{0};  {g_{{i}}})$ which  converges  to $x$ in Cheeger-Gromov topology, where $g_i= R(p_{i})g$.  Since the proof of relation (\ref{relation-1})  is still true for any $k>0$  when $p_r$ is replaced by any $p_i$,   we have
$$B(p_{i},2r_{0};  {g_{{i}}})\subset\{y\in M|~ f(p_{i})-\frac{2r_{0}\sqrt{R_{\max}}}{\sqrt{R(p_i)}}\le f(y)\le f(p_{i})+\frac{2r_{0}\sqrt{R_{\max}}}{\sqrt{R(p_i)}}\}.$$
By  (\ref{special curvature decay}), it follows
\begin{align}
\label{y-p-i-order}\lim_{i\rightarrow\infty}\frac{f(x_i)}{f(p_{i})}=1.
\end{align}
Thus
\begin{align}\label{limit-r}R^{(\infty)}(x,t)=\lim_{i\rightarrow\infty}\frac{R(x_i,R^{-1}(p_{i})t)}{R(p_{i})}
=\lim_{i\rightarrow\infty} \frac{R(x_{i},R^{-1}(p_{i})t)}{R(x_{i})}.\end{align}

Fix a point  $p\in M\setminus \{o\}$. Then (\ref{r-max}) holds.
Choose  $\tau_{i}<0$  such that $f(\phi_{\tau_{i}}(p))=f(x_{i})$. We claim
\begin{align}\label{same-order}
\lim_{i\rightarrow\infty}\frac{f(\phi_{R^{-1}(p_{i})t}(x_i))}{f(\phi_{\tau_{i}+R^{-1}(p_{i})t}(p))}=1,~\forall t\le 0.
\end{align}
It suffices to consider the case $t<0$.  By (\ref{special curvature decay}) and  (\ref{y-p-i-order}), we have
\begin{align}
\frac{f(\phi_{\tau_{i}+R^{-1}(p_{i})t}(p))}{|R^{-1}(p_{i})t|}=&\frac{f(\phi_{\tau_{i}+R^{-1}(p_{i})t}(p))
-f(\phi_{\tau_i}(p))}{|R^{-1}(p_{i})t|}+\frac{f(x_i)}{|R^{-1}(p_{i})t|}\notag\\
= &\frac{|\int_{R^{-1}(p_{i})t}^0|\nabla f|^2ds|}{|R^{-1}(p_{i})t|}+\frac{f(x_i)}{|R^{-1}(p_{i})t|}\notag\\
\to&R_{\max}(1+\frac{C_0}{|t|}),~{\rm as} ~i\to \infty.\notag
\end{align}
Similarly,
\begin{align}
\frac{f(\phi_{R^{-1}(p_{i})t}(x_i))}{|R^{-1}(p_{i})t|}=&\frac{f(\phi_{R^{-1}(p_{i})t}(x_i))
-f(x_i)}{|R^{-1}(p_{i})t|}+\frac{f(x_i)}{|R^{-1}(p_{i})t|}\notag\\
\to&R_{\max}(1+\frac{C_0}{|t|}),~{\rm as} ~i\to \infty.\notag
\end{align}
Thus (\ref{same-order}) comes from  the above two relations.

  By (\ref{special curvature decay}) and the identity (\ref{identity-r}), we have
\begin{align} \lim_{t\rightarrow\infty}\frac{f(\phi_{-t}(p))}{t}=\lim_{t\rightarrow\infty}\frac{d f(\phi_{-t}(p))}{dt}=\lim_{t\rightarrow\infty}|\nabla f|^{2}=R_{\max}.\notag
\end{align}
 Combining this with   (\ref{special curvature decay}), (\ref{same-order})  and  (\ref{y-p-i-order}),
we obtain from (\ref{limit-r}),
\begin{align*}
R^{(\infty)}(x,t)
=&\lim_{i\rightarrow\infty} \frac{f(x_i)}{f(\phi_{R^{-1}(p_{i})t}(x_i))}\\
=&\lim_{i\rightarrow\infty} \frac{f(\phi_{\tau_{i}}(p))}{f(\phi_{\tau_{i}+R^{-1}(p_{i})t}(p))}\\
=&\lim_{i\rightarrow\infty} \frac{\tau_{i}}{\tau_{i}+R^{-1}(p_{i})t}\\
=&\lim_{i\rightarrow\infty} \frac{\tau_{i}}{\tau_{i}+f(x_{i})t/(C_0\cdot R_{\max})}\\
=&\lim_{i\rightarrow\infty} \frac{\tau_{i}}{\tau_{i}+f(\phi_{\tau_{i}}(p))t/(C_0\cdot R_{\max})}\\
=&\frac{C_0}{C_0-t}.
\end{align*}
This proves (\ref{asymtotic-r-t}).

\end{proof}

\begin{proof}[Proof of Theorem \ref{main-theorem-1}] It  suffices   to verify that horizontally   Ricci curvature of $(M,g)$  is $\epsilon$-pinched for some $\epsilon>0$.   We use the contradiction argument.  Then there exist  a sequence of points $p_{i}\rightarrow\infty$ and vectors $v^{(i)}\in T_{p_{i}}\Sigma_{f(p_{i})}$ such that
\begin{align}\label{eq:4-1}
\frac{{\rm \overline{Ric}}(v^{(i)},v^{(i)})}{R(p_{i})\overline{g}(v^{(i)},v^{(i)})}\rightarrow0,~as~i\rightarrow\infty.
\end{align}
By Theorem \ref{theorem-soliton-real}, we may assume that $(M, g_i(t),p_{i})$ converges  to $(M_{\infty},  \widetilde g(t),$ $~p_{\infty})$, where $g_i(t)=R(p_{i})g(R^{-1}(p_{i})t)$, $(M_{\infty}, g_{\infty}(t))=(\mathbb{R} \times N,  dr^2+ g_N(t))$.

Let $X_{(i)}=R(p_{i})^{-\frac{1}{2}}\nabla f$.  Then, for any fixed $r>0$, we have
\begin{align}
\lim_{i\rightarrow\infty}\sup_{B(p_{i},r;  {g_{{i}}})}|X_{(i)}|_{g_{{i}}}=\lim_{i\rightarrow\infty}\sup_{B(p_{i},r; g_{{i}})}\sqrt{R_{\max}-R(x)}=\sqrt{R_{\max}},\notag \end{align}
where $g_{{i}}=R(p_{i})g$.
It follows
 \begin{align}\sup_{B(p_{i},r;   {g_{{i}}})}|\nabla^{k}_{(g_{{i}})}X_{(i)}|_{g_i}\le C_0\sup_{B(p_{i},r;  g_i)}|\nabla^{k-1}_{(g_i)}{\rm Ric}|_{g_i}\le C.\notag
\end{align}
 Thus we may assume that $X_{(i)}$ converges  to $X_{(\infty)}$. On the other hand, by Lemma \ref{lem-for main theorem-1}, we have
$$R^{(i)}(p,0)\rightarrow1, ~\forall~p\in B(p_{i},r;   g_{i}),$$
where $R^{(i)}$ are scalar curvatures of $g_i$.
Hence  similar to (\ref{x-parallel}), we have
$$|\nabla_{(\infty)}X_{(\infty)}|_{g_{\infty}(0)}=\lim_{i\rightarrow\infty}|\nabla_{(g_i)}X_{(i)}|_{g_{i}}=\lim_{i\rightarrow\infty}\frac{|{\rm Ric}|}{R^{\frac{1}{2}}(p_{i})}=0.$$
This implies that the limit manifold  will split off a real line along $X_{(\infty)}$.  As a consequence,  $X_{(\infty)}$ is tangent to $\mathbb{R}$ in $\mathbb{R}\times N$.

Now, we prove that $N$ is compact.  By Lemma \ref{lem-for main theorem-1}, we have
\begin{align}
\Delta_{\widetilde g(t)}R^{(\infty)}(q,t)\equiv0,\notag\\
\frac{\partial R^{(\infty)}(q,t)}{\partial t}=\frac{1}{C_0}(R^{(\infty)}(q,t))^2.\notag
\end{align}
 By flow equation
\begin{align}
\frac{\partial R^{(\infty)}(q,t)}{\partial t}=\Delta_{(\infty)}R^{(\infty)}(q,t)+2|{\rm Ric}^{(\infty)}|^2(q,t),\notag
\end{align}
It follows
\begin{align}\label{eq:eigenvalue}
(R^{(\infty)}(q,t))^2=2C_0|{\rm Ric}^{(\infty)}|^2(q,t).
\end{align}
Let $\lambda_{1}(q,t)\leq \lambda_{2}(q,t)\leq\cdots\leq \lambda_{n-1}(q,t)$ be the eigenvalues of ${\rm Ric}(g_{N}(t))$. By   the condition $C_0>\frac{n-2}{2}$,   we see that there exists a constant $\delta(n)>0$ such that
\begin{align}\label{pinching inequality}
\frac{\lambda_{1}(q,t)}{\lambda_{n-1}(q,t)}\geq \delta(n).
\end{align}
This implies that $N$ is compact.

At last,  we   check  the pinching condition. Choose an orthonormal basis $\{e_{1},\cdots,e_{n}\}$ w.r.t $g_{\infty}(0)$ of $T_{p_{\infty}}M_{\infty}$ such that $e_{n}=\frac{X_{(\infty)}}{ |X_{(\infty)}|_{ g_{\infty}(0)}}$. By the convergence of $X_{(i)}$, we can choose a sequence of orthonormal bases $\{e_{1}^{(i)},\cdots,e_{n}^{(i)}\}$ w.r.t $g_{{i}}$ of $T_{p_{i}}M$ such that $e_{n}^{(i)}=\frac{X_{(i)}}{ |X_{(i)}|_{g_{i}}   }$,
$${\rm span}\{e_{1}^{(i)},\cdots,e_{n-1}^{(i)}\}=T_{p_{i}}\Sigma_{f(p_{i})}$$
and each $e_{k}^{(i)}$ converge to $e_{k}$ for $1\le k\le n-1$.
Then by (\ref{pinching inequality}), we have
\begin{align*}
&{\rm Ric}^{(\infty)}(e_{k},e_{k})\ge \frac{\delta(n)}{n}R^{(\infty)}(p_{\infty}),~\forall~1\le k\le n-1, \\
&{\rm Ric}^{(\infty)}(e_{k},e_{l})=0,~for ~k\neq l.
\end{align*}
By  the convergence, it follows
\begin{align}
&{\rm Ric}^{(i)}(e^{(i)}_{k},e^{(i)}_{k})\ge (1-\varepsilon_{i}) \frac{\delta(n)}{n}R^{(i)}(p_{i}),~\forall~1\le k\le n-1,\notag\\
&{\rm Ric}^{(i)}(e^{(i)}_{k},e^{(i)}_{l})\rightarrow0,~as~i\rightarrow\infty,~~\forall~k\neq l, \notag
\end{align}
where $\varepsilon_{i}\rightarrow 0$ as $i\rightarrow\infty$.
Since $v^{(i)}\in T_{p_{i}}\Sigma_{f(p_{i})}$, we get
\begin{align}
\frac{{\rm Ric}^{(i)}(v^{(i)},v^{(i)})}{g(v^{(i)},v^{(i)})}\ge \frac{\delta(n)}{2n}R^{(i)}(p_{i}), ~as~i\rightarrow\infty.\notag
\end{align}
On the other hand, since the limit manifold splits along $X_{(\infty)}$, we also have
\begin{align}
R^{(i)}(X_{(i)}, e_k^{i}, e_l^{i},X_{(i)})=o(1)R_{kl}^{(i)},~as~i\to\infty.\notag
\end{align}
  By (\ref{horizontal estimate-1}) and  (\ref{special curvature decay}),  it is easy to see
$$\overline{ R}_{kl}^{(i)}\ge \frac{1}{2} R_{kl}^{(i)}.$$
Thus
\begin{align}
\frac{{\rm \overline{Ric}}^{(i)}(v^{(i)},v^{(i)})}{g(v^{(i)},v^{(i)})}\ge \frac{{\rm Ric}^{(i)}(v^{(i)},v^{(i)})}{2g(v^{(i)},v^{(i)})}\ge\frac{\delta(n)}{4n}R^{(i)}(p_{i}), ~as~i\rightarrow\infty.\notag
\end{align}
This is a  contradiction to (\ref{eq:4-1}). Hence we complete the proof.
\end{proof}

\section*{References}

\small

\begin{enumerate}
\renewcommand{\labelenumi}{[\arabic{enumi}]}

\bibitem{BM} Bando, S. and  Mabuchi, T., \textit{Uniqueness of K\"ahler-Einstein metrics modulo connected group actions },  Algebraic geometry, Sendai, Adv. Studies in Pure Math.,  \textbf{10}  (1985) , 11-40.

\bibitem{BW} B\"{o}hm, C. and Wilking, B., \textit{Manifolds with positive curvature operators are space forms}, Ann. of Math., \textbf{167} (2008), 1079-1097.

\bibitem{Br1} Brendle, S., \textit{Rotational symmetry of self-similar solutions to the Ricci flow}, Invent. Math. , \textbf{194} No.3 (2013), 731-764.

\bibitem{Br2} Brendle, S., \textit{Rotational symmetry of Ricci solitons in higher dimensions}, J. Diff. Geom., \textbf{97} (2014), no. 2, 191-214.

\bibitem{CaCh} Cao, H.D. and Chen, Q., \textit{On locally conformally flat gradient steady Ricci solitons},
Trans. Amer. Math. Soc., \textbf{364} (2012), 2377-2391.

\bibitem{Ch} Chen, B.L., \textit{Strong uniqueness of the Ricci flow},  J. Diff.  Geom. \textbf{82} (2009),  363-382.

\bibitem{CST} Chen, X.X., Sun S. and  Tian, G. , \textit{ A note on K\"{a}hler-Ricci soliton},  Int. Math. Res. Not. IMRN, \textbf{17} (2009), 3328-3336.

\bibitem{CZ}  Chen, B.L. and Zhu, X.P., \textit{Complete Riemannian manifolds with pointwise pinched curvature}, Invent. Math. \textbf{140} (2000), no. 2, 423-452.

\bibitem{DZ1}Deng, Y.X. and Zhu, X.H., \textit{Complete non-compact gradient Ricci solitons with nonnegative Ricci curvature},
Math. Z., \textbf{279} (2015), no. 1-2, 211-226.

\bibitem{DZ2} Deng, Y.X. and Zhu, X.H., \textit{Asymptotic behavior of positively curved steady Ricci solitons}, arXiv:math/1507.04802.

\bibitem{DZ3} Deng, Y.X. and Zhu, X.H., \textit{Asymptotic behavior of positively curved steady Ricci solitons, II}, arXiv:math/1604.00142.

\bibitem{D} Deruelle, A., \textit{Steady gradient Ricci soliton with curvature in $L^1$},  Comm. Anal. Geom. 20 (2012), no. 1, 31-53.

\bibitem{H1} Hamilton, R.S., \textit{Three-manifolds with positive Ricci curvature}, J. Diff. Geom. \textbf{17} (1982), 255--306.

\bibitem{H2} Hamilton, R.S., \textit{Eternal solutions to the Ricci flow }, J. Diff. Geom.  \textbf{38} (1993), 1-11.

\bibitem{H3} Hamilton, R.S., \textit{Formation of singularities in the Ricci flow}, Surveys in Diff. Geom. \textbf{2} (1995),
7-136.

\bibitem{Hu} Huisken, G., Ricci deformation on the metric on a Riemannian manifold, J. Diff. Geom. \textbf{21} (1985), 47-62.

\bibitem{MT} Morgan, J. and Tian, G., \textit{ Ricci flow and the Poincar\'{e} conjecture}, Clay Math. Mono., 3. Amer. Math. Soc., Providence, RI; Clay Mathematics Institute, Cambridge, MA, 2007, xlii+521 pp. ISBN: 978-0-8218-4328-4.

\bibitem{Wa} Munteanu, O. and Wang, J.P., \textit{Positively curved shrinking solitons are compact},  arXiv:math/1504.07898.

\bibitem{Ni} Ni, L., \textit{Ancient solutions to K\"{a}hler-Ricci flow}, Third International Congress of Chinese Mathematicians, Part 1, 2, 279-C289, AMS/IP Stud. Adv. Math., 42, pt. 1, 2, Amer. Math. Soc., Providence, RI, 2008.

\bibitem{Pe} Perelman, G., \textit{The entropy formula for the Ricci flow and its geometric applications}, arXiv:math/0211159.

\bibitem{Sh} Shi, W.X., \textit{Ricci deformation of the metric on complete noncompact Riemannian
manifolds}, J. Diff. Geom. \textbf{30} (1989), 223-301.

\bibitem{Wi} Wilking, B., \textit{Nonnegatively and positively curved manifolds}, Surveys in Diff. Geom. XI (2007),
25-62.

\bibitem{Zh} Zhu, X.H.,  Stability of K\"ahler-Ricci flow on a Fano manifold. Math. Ann. 356 (2013), no. 4, 1425-1454.

\end{enumerate}

\end{document}